\theoremstyle{plain}
\newtheorem{theorem}{Theorem}[section]
\newtheorem{proposition}[theorem]{Proposition}
\newtheorem{lemma}[theorem]{Lemma}
\theoremstyle{definition}
\newtheorem{definition}[theorem]{Definition}
\newtheorem*{acknowledgements}{Acknowledgements}
\theoremstyle{remark}
\newtheorem{conjecture}[theorem]{Conjecture}
\newtheorem{remark}[theorem]{Remark}
\newtheorem{notation}[theorem]{Notation}
\newtheorem{convention}[theorem]{Convention}
\newtheorem{claim}[theorem]{Claim}
\newtheorem{condition}[theorem]{Condition}
\numberwithin{equation}{theorem}
\DeclareMathOperator{\Pic}{Pic}
\DeclareMathOperator{\Proj}{Proj}
\DeclareMathOperator{\pr}{pr}
\DeclareMathOperator{\Gr}{Gr}
\DeclareMathOperator{\SG}{SG}
\DeclareMathOperator{\rank}{rank}
\DeclareMathOperator{\Aut}{Aut}
\DeclareMathOperator{\Bl}{Bl}
\DeclareMathOperator{\Sing}{Sing}
\DeclareMathOperator{\Chow}{Chow}
\DeclareMathOperator{\sub}{sub}
\DeclareMathOperator{\im}{Im}
\DeclareMathOperator{\OG}{OG}
\DeclareMathOperator{\Sym}{Sym}
\DeclareMathOperator{\gen}{gen}
\DeclareMathOperator{\spe}{sp}
\DeclareMathOperator{\Sp}{Sp}
\newcommand{\sE}{\mathcal{E}}
\newcommand{\sF}{\mathcal{F}}
\newcommand{\sG}{\mathcal{G}}
\newcommand{\sM}{\mathcal{M}}
\newcommand{\sS}{\mathcal{S}}
\newcommand{\sQ}{\mathcal{Q}}
\newcommand{\cO}{\mathcal{O}}
\newcommand{\fg}{\mathfrak{g}}
\newcommand{\fh}{\mathfrak{h}}
\newcommand{\bC}{\mathbf{C}}
\newcommand{\bP}{\mathbf{P}}
\newcommand{\bQ}{\mathbf{Q}}
\newcommand{\bR}{\mathbf{R}}
\newcommand{\bZ}{\mathbf{Z}}
\newcommand{\bO}{\mathbf{O}}
\DeclareMathOperator{\Pas}{\mathcal{P}}
\newcommand{\PF}{\Pas_{F_4}}
\newcommand{\PG}{\Pas_{A_1 \times G_2}}
\newcommand{\tPF}{\widetilde{\Pas}_{F_4}}
\newcommand{\tPG}{\widetilde{\Pas}_{A_1 \times G_2}}
\newcommand{\tX}{\widetilde X}
\newcommand{\blank}{{-}}
\newcommand{\dB}{
{
\SelectTips{}{12}
\objectmargin={0pt}
\objectheight={30pt}
\objectwidth={5pt}
\xygraph{!{<0cm,0cm>;<0.7cm,0cm>:<0cm,0.7cm>::}
\circ*=!D{\scriptstyle 1} -[r] \circ*=!D{\scriptstyle 2}  -|{\cdots}[r] \circ -[r]  \circ*=!D{\scriptstyle n-1}-@2{-}|(.6)@{>}[r] \circ*=!D{\scriptstyle n}
}}
}
\newcommand{\dC}{
{
\SelectTips{}{12}
\objectmargin={0pt}
\objectheight={30pt}
\objectwidth={5pt}
\xygraph{!{<0cm,0cm>;<0.7cm,0cm>:<0cm,0.7cm>::}
\circ*=!D{\scriptstyle 1} -[r] \circ*=!D{\scriptstyle 2}  -|{\cdots}[r] \circ -[r]  \circ*=!D{\scriptstyle n-1}-@2{-}|(.5)@{<}[r] \circ*=!D{\scriptstyle n}
}}
}
\newcommand{\dF}{
{
\SelectTips{}{12}
\objectmargin={0pt}
\objectheight={20pt}
\objectwidth={5pt}
\xygraph{!{<0cm,0cm>;<0.7cm,0cm>:<0cm,0.7cm>::}
\circ*=!D{\scriptstyle 1}-[r] \circ*=!D{\scriptstyle 2} -@2{-}|(.6)@{>}[r] \circ*=!D{\scriptstyle 3}-[r] \circ*=!D{\scriptstyle 4} 
}}
}
\newcommand{\dG}{
{
\SelectTips{}{12}
\objectmargin={0pt}
\objectheight={20pt}
\objectwidth={5pt}
\xygraph{!{<0cm,0cm>;<0.7cm,0cm>:<0cm,0.7cm>::}
\circ*=!D{\scriptstyle 1}-@3{-}|(.6)@{>}[r] \circ*=!D{\scriptstyle 2}
}}
}
\newcommand{\dAG}{
{
\SelectTips{}{12}
\objectmargin={0pt}
\objectheight={20pt}
\objectwidth={5pt}
\xygraph{!{<0cm,0cm>;<0.7cm,0cm>:<0cm,0.7cm>::}
\circ*=!D{\scriptstyle 0}-@0{-}[r] \circ*=!D{\scriptstyle 1}-@3{-}|(.6)@{>}[r] \circ*=!D{\scriptstyle 2}
}}
}
\def\MR#1{}
\title[Stability of tangent bundles]{Fano manifolds and stability of tangent bundles}
\author[A. KANEMITSU]{Akihiro KANEMITSU}
\date{\today}
\address{Department of Mathematics, Graduate school of Science, Kyoto University, Kyoto 606-8502, Japan}
\email{kanemitu@math.kyoto-u.ac.jp}
\thanks{The author is a JSPS Research Fellow and supported by the Grant-in-Aid for JSPS fellows (JSPS KAKENHI Grant Number 18J00681).}
\subjclass[2010]{14J45, 14J40, 14J60}
\keywords{Fano manifold, tangent bundle, stability, horospherical variety}
\begin{document}

\begin{abstract}
We determine the stability/instability of the tangent bundles of the Fano varieties in a certain class of two orbit varieties, which are classified by Pasquier in 2009.
As a consequence, we show that some of these varieties admit unstable tangent bundles, which disproves a conjecture on stability of tangent bundles of Fano manifolds.
\end{abstract}

\maketitle

\subsection{}
A \emph{Fano manifold} $X$ is, by definition, a smooth projective variety $X$ whose anti-canonical divisor $-K_X$ is ample.
From a differential geometric viewpoint, it is very important to detect which Fano manifolds admit K\"ahler-Einstein metrics and, after the celebrated works \cite{Tia97,Don02,Ber16,CDS15a,CDS15b,CDS15c,Tia15}, it has been accomplished that the existence of a K\"ahler-Einstein metric on a given Fano manifold $X$ is equivalent to a purely algebraic stability condition for $X$, called \emph{$K$-polystability}.

As is well-known, not every Fano manifold admits a K\"ahler-Einstein metric.
For example, Matsushima proved that the existence of a K\"ahler-Einstein metric on a Fano manifold $X$ implies the reductivity of the automorphism group of $X$ \cite{Mat57}.
The automorphism group of a Fano manifold $X$ is, however, not always reductive.
Also it is usually very difficult to determine whether or not a given Fano manifold $X$ admits a K\"ahler-Einstein metric, though the existence of such a metric is rephrased by the $K$-polystability of the Fano manifold $X$ in question.

Therefore it would be useful to study several variants of stability conditions on Fano manifolds.
As such a variant, stability of tangent bundles (in the sense of Mumford-Takemoto) has attracted several attention of researchers.
By the Kobayashi-Hitchin correspondence, the polystability of the tangent bundle is equivalent to the existence of a Hermitian-Einstein metric on the bundle \cite{Kob82,Lub83,Don85,UY86,Don87}, and hence the polystability of the tangent bundle is weaker than the existence of a K\"ahler-Einstein metric.
Also, thanks to its simplicity, the stability of the tangent bundle is rather easy to handle in the framework of algebraic geometry.
Moreover it is expected that, for a given Fano manifold $X$, the (in)stability of the tangent bundle reflects very well the geometry of $X$.
For example, a folklore conjecture\footnote{It is written in \cite[Remark 2.13]{Ara19} that this conjecture is due to Iskovskikh.} claims the following:
\begin{conjecture}[Stability of tangent bundles]\label{conj:stab}
 Let $X$ be a Fano manifold.
 Assume that the Picard number $\rho _X$ of  $X$ is one.
 Then the tangent bundle $\Theta_X$ is (semi)stable.
\end{conjecture}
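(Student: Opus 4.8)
The plan is to verify the slope-(semi)stability inequality directly. Since $\rho_X=1$, fix the ample generator $H$ of $\Pic(X)\cong\bZ$ and write $-K_X=\iota_X H$ with $\iota_X\in\bZ_{>0}$ the Fano index; set $n=\dim X$ and $d=H^n$. As $c_1(\Theta_X)=-K_X=\iota_X H$ we have $\mu(\Theta_X)=\iota_X d/n$, where slopes are taken with respect to $H$. Every saturated subsheaf $\sF\subsetneq\Theta_X$ of rank $p$ with $1\le p\le n-1$ has $\det\sF=a_\sF H$ for a single integer $a_\sF$, so $\mu(\sF)=a_\sF d/p$. Thus (semi)stability is equivalent to the numerical assertion that $a_\sF/p<\iota_X/n$ (resp.\ $\le$) for every such $\sF$, and this is the bound I would try to establish.

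To find the extremal $\sF$ I would pass to the Harder--Narasimhan filtration. If $\Theta_X$ failed to be semistable, let $\sF\subset\Theta_X$ be its maximal destabilizing subsheaf; it is canonical, saturated, semistable, and satisfies $\mu(\sF)>\mu(\Theta_X)>0$. The crucial structural point is that such an $\sF$ is automatically a \emph{foliation}. Indeed, the Lie bracket induces an $\cO_X$-linear map $\sF\otimes\sF\to\Theta_X/\sF$; since $\sF\otimes\sF$ is semistable of slope $2\mu(\sF)$ while every Harder--Narasimhan slope of $\Theta_X/\sF$ is $<\mu(\sF)$, this map vanishes, giving $[\sF,\sF]\subseteq\sF$. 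So any destabilization would be an integrable distribution of strictly positive slope.

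I would then use positivity to contradict $\mu(\sF)>\mu(\Theta_X)$. As $a_\sF>0$, the foliation $\sF$ has ample determinant, and the algebraicity theory for positive foliations (Bogomolov--McQuillan, Bost, Kebekus--Sol\'a Conde--Toma, Campana--P\u{a}un) shows the general leaf is algebraic with rationally connected closure. A general leaf sweeps out a covering family of rational curves whose tangents lie in $\sF$; applying bend-and-break to this family yields rational curves $C$ with $-K_X\cdot C$ bounded above in terms of $p$ and the $H$-degree of $C$. Because $\rho_X=1$ forces every curve class to be a positive multiple of a single generator, these inequalities translate into an upper bound on $a_\sF/p$; arranging this bound to be $\iota_X/n$, with strict inequality unless $\sF$ arises from an honest projective-bundle or cone structure, would contradict $\mu(\sF)>\mu(\Theta_X)$ and prove the assertion.

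The main obstacle is exactly this final degree estimate. Converting ``$\sF$ is a positive foliation of rank $p$'' into the sharp bound $a_\sF\le p\,\iota_X/n$ requires a precise hold on the minimal rational curves tangent to $\sF$, through their variety of minimal rational tangents. The estimate is transparent when these curves are abundant and uniform, as for $\bP^n$ and the rational homogeneous spaces, where additionally $\Aut(X)$ forces $\sF$ to be a homogeneous subbundle and reduces everything to representation theory. The delicate case is that of Fano manifolds that are only \emph{almost} homogeneous---such as the two-orbit varieties classified by Pasquier studied here---where the distribution distinguished by the smaller orbit need not be homogeneous and its slope is hard to bound. Pinning down whether a foliation of supercritical slope can occur in this regime is the crux on which the whole question turns.
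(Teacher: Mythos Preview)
The statement you are attempting to prove is Conjecture~\ref{conj:stab}, not a theorem, and the central result of the paper (Theorem~\ref{thm:stab}) \emph{disproves} it. The horospherical varieties with associated triple $(B_n,\omega_{n-1},\omega_n)$ for $n\ge 4$, and the one with triple $(F_4,\omega_2,\omega_3)$, are Fano manifolds with $\rho_X=1$ whose tangent bundles are not even semistable. In each of these cases the canonical foliation $\sF$ of Section~\ref{sect:CF} satisfies $\mu(\sF)>\mu(\Theta_X)$; for instance, for $(F_4,\omega_2,\omega_3)$ one computes $c_1(\sF)/\rank\sF=1/3>6/23=c_1(\Theta_X)/\dim X$. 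Hence the inequality $a_{\sF}/p\le \iota_X/n$ that your argument is designed to establish is simply false in general, and no proof strategy can rescue it.

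Your outline is accurate up to the point where it breaks. The observation that a maximal destabilizing subsheaf of $\Theta_X$ is an algebraically integrable foliation is precisely what the paper records in Proposition~\ref{prop:MDS}, Theorem~\ref{thm:AL} and Proposition~\ref{prop:notss}. But the ``final degree estimate'' you hope to extract from bend-and-break does not exist: there is no universal bound of the shape $a_{\sF}/p\le \iota_X/n$ coming from rational-curve geometry on an arbitrary Fano manifold of Picard number one. Your closing paragraph even flags Pasquier's two-orbit varieties as the delicate case; the paper shows this is not a technical obstacle to be overcome but the exact location of the counterexamples. Indeed, $\Aut^0(X)$-invariance forces any destabilizing foliation to coincide with the canonical one (Lemma~\ref{lem:stab}), and a direct slope comparison then settles each case---sometimes in favour of stability, sometimes against.
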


Conjecture~\ref{conj:stab} has been confirmed in the following cases:
\begin{enumerate}
 \item $r_X =1$ \cite[Theorem~3]{Rei78}, where $r_X$ is the Fano index of $X$;
 \item Smooth complete intersections in the projective space $\bP^m$ \cite{Sub91} \cite[Corollary~1.5]{PW95};
 \item $\dim X \leq 5$ \cite{Hwa98,PW95};
 \item $\dim X =6$ (semistability of $\Theta_X$) \cite{Hwa98};
 \item $r_X \geq \dim X -2$ \cite{PW95};
 \item $r_X > \frac{\dim X+1}{2}$ and the fundamental divisor $H_X$ is very ample \cite{HM99b,Hwa01},
\end{enumerate}
where the \emph{Fano index} $r_X$ is defined as the largest integer that divides $-K_X$ in the Picard group $\Pic(X)$, and the \emph{fundamental divisor} $H_X$ is the divisor $-K_X/r_X$.
Note that, in \cite{PW95}, the case $r_X =n-2$ is studied under the condition that $\left|-K_X\right|$ contains enough smooth members, while the condition is lately proved to hold automatically by \cite{Amb99,Mel99} (cf. \cite{Muk89}).
See also \cite{Tia92,Hwa00,BS05,Bis10,Iye14,Liu18} for other related works on stability of tangent bundles.

It would be noteworthy that there is a variant of Conjecture~\ref{conj:stab}, which addresses (in)stability of tangent bundles for all Fano manifolds (not necessarily $\rho_X =1$); it is expected that the instability of $\Theta_X$ reflects the Mori-theoretic geometry of the variety $X$, and hence that it is ``realized'' by a Mori contraction $\pi \colon X \to Y$.
See, e.g., \cite{Ste96} and \cite[Conjecture~3.21]{Pet01} for this variant of Conjecture~\ref{conj:stab}.

\subsection{}
The purpose of this paper is to study Conjecture~\ref{conj:stab} for a certain class of Fano manifolds, which are classified by Pasquier.

Boris Pasquier, in his article \cite{Pas09}, classified Fano manifolds with the following condition:
\begin{condition}[Pasquier's condition]\label{cond:Pas}
$X$ is a (smooth) Fano manifold with $\rho _X =1$.
Under the natural action of the identity component $\Aut^0(X)$ of the automorphism group  of $X$, the variety $X$ decomposes into  two orbits $X^0 \bigsqcup Z$, where $X^0$ is the open orbit and $Z$ is the closed orbit.
Moreover, the blow-up $\Bl_Z X$ of $X$ along $Z$ is again an $\Aut^0(X)$-variety with two orbits $X^0$ and the exceptional divisor $E$.
\end{condition}
His result can be summarized as follows:
\begin{theorem}[{\cite[Theorem~0.2]{Pas09}}]\label{thm:Pas}
Let $X$ be a Fano manifold which satisfies Condition~\ref{cond:Pas}.
Then one of the following conditions holds:
\begin{enumerate}
 \item \label{thm:Pas_h}
 $X$ is a horospherical variety, $\Aut^0(X)$ is not reductive, and the isomorphic class of $X$ is uniquely determined by a triple $(D,\omega_Y, \omega_Z)$, where $(D,\omega_Y,\omega_Z)$ is one of the following triples:
  \begin{enumerate}[label=(\roman*)]
   \item \label{thm:Pas_h_bn} $(B_n,\omega_{n-1},\omega_n)$ ($n \geq 3$);
   \item \label{thm:Pas_h_b3} $(B_3,\omega_1,\omega_3)$;
   \item \label{thm:Pas_h_c} $(C_n,\omega_k,\omega_{k-1})$ ($n \geq 2$, $k \in  \{\,2,\dots, n \,\}$);
   \item \label{thm:Pas_h_f} $(F_4,\omega_2,\omega_3)$;
   \item \label{thm:Pas_h_g} $(G_2,\omega_1,\omega_2)$.
  \end{enumerate}
 \item \label{thm:Pas_f} $\Aut^0(X)$ is a semi-simple group of type $F_4$, and $X$ is isomorphic to an $F_4$-variety, which we will denote by $\PF$.
 \item \label{thm:Pas_g} $\Aut^0(X)$ is a semi-simple group of type $A_1 \times G_2$, and $X$ is isomorphic to an $A_1 \times G_2$-variety, which we will  denote by $\PG$.
\end{enumerate}
\end{theorem}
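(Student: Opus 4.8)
Because the statement is a classification, the natural framework is the theory of spherical varieties together with the Luna--Vust classification of spherical embeddings. Set $G := \Aut^0(X)$, write the open orbit as $X^0 \cong G/H$, and put $M := \Bl_Z X$; by Condition~\ref{cond:Pas}, $M$ is a smooth projective $G$-variety whose boundary $M \setminus X^0$ is a single prime divisor $E$. The plan is: (a) deduce from Condition~\ref{cond:Pas} that $X$ is \emph{spherical}, and moreover \emph{horospherical} whenever $G$ is not reductive; (b) classify the possibilities, separating the case ``$G$ not reductive'' (which will be the horospherical case) from the case ``$G$ reductive''; and (c) in the former read off the five triples, and in the latter the two sporadic varieties $\PF$ and $\PG$, determining $\Aut^0(X)$ along the way.

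Step (a) is the technical heart. The closed orbit $Z$ is a complete homogeneous space $G/P$, on which a Borel subgroup $B \subseteq G$ automatically has a dense orbit; the idea is to bootstrap this dense $B$-orbit from $Z$ --- respectively from $E$ inside $M$ --- onto the open orbit. Concretely, I would analyse the local structure of $X$ along $Z$: by smoothness a $G$-equivariant neighbourhood of $Z$ is the total space of a $G$-linearised vector bundle over $G/P$, i.e.\ $G \times_P W$ for a $P$-module $W$, and the demand that blowing up $Z$ again produce only two orbits --- this is precisely where the full strength of Condition~\ref{cond:Pas} is used, beyond a bare two-orbit hypothesis --- forces $W$ to be so small that the complexity of $X$ vanishes; a parallel analysis using the unipotent radical of $G$ in the non-reductive case upgrades this to the statement that $H$ contains a maximal unipotent subgroup of $G$. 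I expect this step to be the main obstacle, as it must be carried out uniformly, before the classification is available.

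Granting step (a), step (b) is largely bookkeeping. If $G$ is not reductive, then $X$ is a non-homogeneous smooth projective horospherical variety of Picard number one; the colored datum of such a variety reduces to a Dynkin diagram $D$ with two marked nodes $\omega_Y$, $\omega_Z$, which determine maximal parabolic subgroups with $N_G(H) = P_{\omega_Y} \cap P_{\omega_Z}$ and $N_G(H)/H \cong \mathbf{G}_m$, a flag variety $Y = G/P_{\omega_Y}$, and the closed orbit $Z = G/P_{\omega_Z}$. Imposing smoothness and $\rho_X = 1$, and discarding the homogeneous (single-orbit) cases which are forbidden by Condition~\ref{cond:Pas}, leaves exactly the five triples in the statement; for each of them a direct inspection of the colored fan shows that $\Bl_Z X$ is again horospherical with just the two orbits $X^0$ and $E$, so that Condition~\ref{cond:Pas} holds, and one computes that $\Aut^0(X)$ is the ``extended'' (non-reductive) automorphism group of the embedding. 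If instead $G$ is reductive, then $X$ is a smooth complete two-orbit variety under a connected reductive group, hence occurs in Cupit-Foutou's classification of such varieties; running through that finite list and keeping only the members for which $-K_X$ is ample, $\rho_X = 1$, and $\Bl_Z X$ has two orbits leaves precisely $\PF$, with $\Aut^0$ of type $F_4$, and $\PG$, with $\Aut^0$ of type $A_1 \times G_2$, for which Condition~\ref{cond:Pas} is verified by hand. Assembling the two cases gives the trichotomy of the statement, while the uniqueness of $X$ for a given combinatorial datum follows from the Luna--Vust uniqueness of a spherical embedding with prescribed colored fan.
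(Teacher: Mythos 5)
The paper does not actually prove Theorem~\ref{thm:Pas}: it is imported verbatim from Pasquier's article \cite{Pas09}, so there is no internal proof to measure you against. Taken as a reconstruction of Pasquier's argument, your outline does capture the right architecture --- split according to whether $G=\Aut^0(X)$ is reductive, use the classification of smooth projective horospherical varieties of Picard number one in the non-reductive case and the classification of two-orbit varieties under reductive groups (Cupit-Foutou) in the reductive case, then check Condition~\ref{cond:Pas} and the automorphism groups case by case.

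However, step (a), which you yourself identify as the technical heart, is not proved, and the one concrete tool you propose for it is unsound. A $G$-equivariant (Zariski or analytic) neighbourhood of the closed orbit $Z\simeq G/P$ is in general \emph{not} isomorphic to a neighbourhood of the zero section in the normal bundle $G\times_P W$: there is no equivariant tubular neighbourhood theorem in the algebraic or holomorphic category. For instance, for $\mathrm{SL}_2$ acting on $\bP^2=\bP(\mathfrak{sl}_2)$ with closed orbit the conic, every point near the conic has stabilizer conjugate to $N(T)$, while in the total space of the normal bundle the points off the zero section form a homogeneous space with non-reductive stabilizer, so no equivariant identification of neighbourhoods exists; and when the stabilizers are non-reductive one cannot fall back on \'etale slice theorems either. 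Consequently the chain ``Condition~\ref{cond:Pas} forces $W$ small $\Rightarrow$ complexity zero $\Rightarrow$ $X$ horospherical whenever $G$ is non-reductive'' is asserted rather than established, and this is exactly the nontrivial content of the theorem (a correct treatment would go through the Brion--Luna--Vust local structure theorem along the boundary divisor $E\subset\Bl_Z X$, or simply cite \cite{Pas09}). A smaller but real omission of the same kind: that $\Aut^0(X)$ is precisely of type $F_4$ resp.\ $A_1\times G_2$ for the two exceptional varieties (i.e.\ that they are not homogeneous under a larger group), and that it is non-reductive for the five horospherical triples, is also only asserted. Since the remaining steps defer to the two external classifications, the proposal as written is a plausible roadmap, not a proof.
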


In particular, if $X$ is one of the manifolds as in Theorem~\ref{thm:Pas}~\ref{thm:Pas_h}, then $X$ does not admit K\"ahler-Einstein metrics since its automorphism group is not reductive.
On the other hand, with Conjecture~\ref{conj:stab} in his mind, one may expect the stability of the tangent bundles of all Fano manifolds in Theorem~\ref{thm:Pas}.
This expectation or Conjecture~\ref{conj:stab} is, however, no longer true in general; the purpose of this paper is to prove the following theorem:

\begin{theorem}[Stability/instability of tangent bundles]\label{thm:stab}
Let $X$ be a Fano manifold as in Theorem~\ref{thm:Pas}.
 \begin{enumerate}
  \item Then the tangent bundle of $X$ is not semistable in the following cases:
   \begin{itemize}
    \item Case~\ref{thm:Pas_h}~\ref{thm:Pas_h_bn} with $n \geq 4$;
    \item Case~\ref{thm:Pas_h}~\ref{thm:Pas_h_f}.
   \end{itemize}
  \item In the remaining cases, the tangent bundle of $X$ is stable.
 \end{enumerate}
\end{theorem}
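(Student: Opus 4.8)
The plan is to exploit the $\Aut^0(X)$-action and reduce the whole question to a finite representation-theoretic computation, as follows.

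Since $\rho_X=1$, (in)stability of $\Theta_X$ is slope-(in)stability with respect to the ample generator $H$ of $\Pic(X)$, and $\mu(\Theta_X)=r_XH^{\dim X}/\dim X$, where $r_X$ is the Fano index. The first step is to reduce to $G$-equivariant subsheaves, $G$ being the connected reductive group acting on $X$ with dense orbit $X^0=G/H$ (the simple group of type $D$ in the horospherical cases, and $G$ itself in the other two). The bundle $\Theta_X$ is canonically $G$-linearized; if it is not semistable its maximal destabilizing subsheaf is unique, hence preserved by $\Aut^0(X)$, hence $G$-invariant, and a parallel argument with the socle reduces the verification of stability (not merely semistability) to $G$-equivariant subsheaves once one notes that the isotropy representation below turns out to be multiplicity-free, which rules out the polystable-but-not-stable pathology.

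Second, one classifies the $G$-equivariant reflexive subsheaves of $\Theta_X$. As the closed orbit $Z$ has codimension $\ge 2$ in $X$, restriction to $X^0$ identifies them with the $H$-submodules $W$ of the isotropy representation $T_xX\cong\mathfrak{g}/\mathfrak{h}$ ($x\in X^0$): the subsheaf $\mathcal{F}_W$ attached to $W$ is the reflexive hull of the $G$-translate of the subbundle of $\Theta_{X^0}$ determined by $W$, and $\mathcal{F}_W$ is determined by $W$ since a reflexive sheaf is recovered from its restriction to $X^0$. In the horospherical cases $\mathfrak{h}=\mathfrak{h}_L\oplus\mathfrak{r}_u(\mathfrak{p})$ with $\mathfrak{p}=\operatorname{Lie}N_G(H)=\mathfrak{p}_{\omega_Y}\cap\mathfrak{p}_{\omega_Z}$ and $\mathfrak{h}_L\subset\mathfrak{l}$ of codimension one, so that $\mathfrak{g}/\mathfrak{h}\cong\bC_0\oplus\mathfrak{r}_u(\mathfrak{p}^-)$ as a vector space, $\bC_0=\mathfrak{p}/\mathfrak{h}$ being the trivial summand; but this is \emph{not} a decomposition of $H$-modules — the unipotent radical pushes each graded piece of $\mathfrak{r}_u(\mathfrak{p}^-)$ into $\bC_0$ — so every nonzero $H$-submodule contains $\bC_0$ and is $\bC_0$ together with an ``upper set'' among the $\bZ^2$-graded pieces of $\mathfrak{r}_u(\mathfrak{p}^-)$ (graded by the coefficients of $\alpha_Y,\alpha_Z$). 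This leaves a short explicit list of $W$; for $\PF$ and $\PG$ one writes $\mathfrak{g}/\mathfrak{h}$ and its submodules down directly.

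Third, one turns this into arithmetic. The class $c_1(\mathcal{F}_W)\in\Pic(X)=\bZ H$ is the image of the character $\det W$ of $H$ under the natural homomorphism from the character group $X^{*}(H)$ to $\Pic(X^0)=\bZ H$, and for $W=\mathfrak{g}/\mathfrak{h}$ this gives $c_1(\Theta_X)=-K_X=r_XH$. Writing $[\det W]\in\bZ$ for this image, one gets
\[
\frac{\mu(\mathcal{F}_W)}{\mu(\Theta_X)}=\frac{\dim X}{\rank W}\cdot\frac{[\det W]}{r_X},
\]
so $\Theta_X$ fails to be semistable precisely when some proper nonzero $W$ makes the right-hand side exceed $1$, and is stable when it stays $<1$ for all such $W$. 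The integers $[\det W]$ and $r_X$ — equivalently, the identification of the one-parameter group $N_G(H)/H$ with the ample class — are read off from Pasquier's explicit description of $X$ (its colored fan, or its embedding in $\bP(V(\omega_Y)\oplus V(\omega_Z))$).

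Finally, one carries out the computation case by case. For $(B_n,\omega_{n-1},\omega_n)$ with $n\ge 4$ and for $(F_4,\omega_2,\omega_3)$ one exhibits the offending $W$ — geometrically the saturation in $\Theta_X$ of the relative tangent sheaf of a natural $G$-equivariant contraction of $\Bl_Z X$ — and checks that the displayed ratio exceeds $1$; the $n=3$ member of the $B$-series and the $G_2$ case fall just on the stable side of precisely this inequality, which is the source of the dichotomy. For each of the remaining families $(C_n,\omega_k,\omega_{k-1})$, $(B_3,\omega_1,\omega_3)$, $\PF$, $\PG$, one runs through the (short, although $k$-dependent for $C_n$) list of submodules $W$ and verifies the ratio is always $<1$. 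I expect the main obstacle to be the third step in practice: pinning down, for each Pasquier variety, the precise identification of $X^{*}(H)$ with $\Pic(X)=\bZ H$ — hence the numbers $[\det W]$ and $r_X$ — together with the care needed to certify that the list of $H$-submodules is complete; once this data is in hand, the remaining comparisons are finite and elementary.
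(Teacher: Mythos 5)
Your overall strategy---identify $G$-invariant saturated subsheaves of $\Theta_X$ with $H$-submodules of the isotropy representation $\mathfrak{g}/\mathfrak{h}$ by restricting to the open orbit (legitimate, since $Z$ has codimension at least two and saturated invariant subsheaves are determined there), then compare slopes read off from characters of $H$---is genuinely different from the paper, which never classifies all invariant subsheaves: Lemma~\ref{lem:stab} shows that any $\Aut^0(X)$-invariant \emph{algebraically integrable foliation} must be the canonical one, by resolving the leaf-closure map equivariantly and using that $\Bl_Z X$ has only two orbits, so a single slope comparison per case suffices. Your route, in exchange, must certify a \emph{complete} list of $H$-submodules, and this is where the proposal has a genuine gap: the structural claim that every nonzero $H$-submodule of $\mathfrak{g}/\mathfrak{h}$ contains $\bC_0=\mathfrak{p}/\mathfrak{h}$ and is ``$\bC_0$ plus an upper set of graded pieces'' is asserted, not proved. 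It is not automatic: bracketing a root space $\mathfrak{g}_{-\alpha}$ against $\mathfrak{g}_{\beta}\subset\mathfrak{r}_u(\mathfrak{p})\subset\mathfrak{h}$ reaches $\bC_0$ in one step only when $\langle\chi,\alpha^{\vee}\rangle\neq 0$ for the character $\chi$ cutting out $\mathfrak{h}$ in $\mathfrak{p}$, and this pairing does vanish for some relevant roots, so one needs multi-step computations carried out case by case (and separately for $\PF$ and $\PG$, whose isotropy algebras are not horospherical and are only waved at). Since part (2) of the theorem rests entirely on the completeness of this list, what you have there is a program rather than a proof; the instability half---exhibiting one destabilizing $W$, which is exactly the paper's canonical foliation $\sF$---is fine once the degree normalization you defer is pinned down.

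A second, fixable, gap is your reduction of \emph{stability} (as opposed to semistability) to equivariant subsheaves. The socle of a semistable $\Theta_X$ is unique, hence invariant; but if the socle is all of $\Theta_X$ you are in the polystable-but-not-stable case, and multiplicity-freeness of $\mathfrak{g}/\mathfrak{h}$ as an $H$-module does not rule this out: a polystable decomposition $\Theta_X\simeq\sE_1\oplus\cdots\oplus\sE_k$ need not be $G$-equivariant, so its restriction to $X^0$ need not consist of homogeneous subbundles and the representation-theoretic hypothesis never gets to act on it. The correct tool is the indecomposability of $\Theta_X$ for Fano manifolds of Picard number one (\cite[Proposition~3.1]{CP02}, cf.\ Remark~\ref{rem:stab}), or the paper's Proposition~\ref{prop:nots}, which produces a proper invariant semistable saturated subsheaf of slope $\mu(\Theta_X)$ by summing translates of a stable one. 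With that substitution your first step is sound; the substantive missing work remains the classification of $H$-submodules and the explicit identification of $X^{*}(H)\to\Pic(X^0)\simeq\bZ H_X$ in each of Pasquier's cases.
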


\subsection{}
The present article is organized as follows:
Section~\ref{sect:prelim} presents preliminaries; we will review basic concepts of stability of vector bundles and foliations on varieties.
Then we will introduce a set $\Phi^G$ of foliations on $X$, and provide propositions which ensure that the set $\Phi^G$ is non-empty if $\Theta_X$ is unstable.
In Section~\ref{sect:Pas}, we will describe the geometry of the varieties as in Theorem~\ref{thm:Pas}, mainly based on \cite{Pas09} and \cite{GPPS19}.
In Section~\ref{sect:CF}, we will introduce \emph{canonical foliations} on the varieties as in Theorem~\ref{thm:Pas} and then provide a criterion of the stability of tangent bundles in terms of canonical foliations.
Then the proof of Theorem~\ref{thm:stab} will be performed.
In the last section, we will give several remarks.

\begin{convention}
Given a torsion free sheaf $\sE$ on a smooth projective variety $X$, we will consider its first Chern class $c_1(\sE)$ as an element of the Chow group $A^1(X)$.
If $A^1(X) \simeq \bZ$, then we will denote by $H_X$ the ample generator of $A^1(X)$ and we may identify the first Chern class with an integer.

For a vector bundle $\sE$ on a variety $X$, we will denote by $\bP(\sE)$ the Grothendieck projectivization of $\sE$,  i.e., $\bP(\sE) = \Proj(\bigoplus_{i\geq0} \Sym ^i\sE)$.
A morphism $f \colon X \to Y$ is called a \emph{projective bundle} if there is a vector bundle $\sE$ on $Y$ such that $X \simeq \bP(\sE)$ and the morphism $f$ is the natural projection.
We will denote by $\xi_\sE$ the relative tautological divisor of $\bP(\sE)$.
We will use the same convention for the projectivization of a vector space $V$.
Therefore $\bP(V)$ parametrizes the hyperplanes in $V$.
Set $\bP_{\sub}(V) \coloneqq \bP(V^{\vee})$, which parametrizes the $1$-dimensional subspaces in $V$.

For a smooth variety $X$ (resp.\ a smooth morphism $\pi \colon X \to Y$), we will denote by $\Theta _X$ (resp.\ $\Theta_\pi$) the tangent bundle of $X$  (resp. the relative tangent bundle of $\pi$). 
\end{convention}

\begin{acknowledgements}
The author wishes to express his gratitude to Professors Gianluca Occhetta and Luis Eduardo Sol\'a Conde for helpful discussions on the geometry of horospherical varieties.
He is also grateful to Professors Kento Fujita and Yuji Odaka for helpful comments and discussions on $K$-stability of Fano manifolds.
He also wishes to thank Professor Shigeru Mukai for various discussions, mainly for discussions on the geometry of a Mukai manifold $\PG$.
\end{acknowledgements}

\section{Preliminaries}\label{sect:prelim}

\subsection{Stability of vector bundles}
Here we briefly recall the concept of stability of vector bundles in the sense of Mumford-Takemoto.

Let $X$ be a smooth projective variety of dimension $n$, and fix an ample Cartier divisor $H$ on $X$.
Given a (nonzero) torsion-free sheaf  $\sE$ of rank $r$ on $X$, we define its \emph{first Chern class} $c_1(\sE)$ as the divisor class of the line bundle $(\bigwedge^r \sE)^{\vee \vee}$.
Then the \emph{slope} of $\sE$ with respect to the polarization $H$ is defined as the averaged degree of the first Chern class:
\[
\mu (\sE) \coloneqq \frac{c_1(\sE) \cdot H^{n-1}}{\rank \sE}.
\]
Recall that a subsheaf $\sF \subset \sE$ is said to be \emph{saturated} if the quotient $\sE/\sF$ is torsion free.

\begin{definition}[Stability of torsion free sheaves]
 Let $\sE$ be a torsion free sheaf on a smooth projective variety $X$.
Then the sheaf $\sE$ is called \emph{stable} (resp.\ \emph{semistable}) if, for any (nonzero) saturated subsheaf $\sF \subsetneq \sE$, the inequality $\mu (\sF) < \mu (\sE)$ (resp.\ $\mu (\sF) \leq \mu (\sE)$) holds.
\end{definition}

\begin{remark}[Polystability of sheaves]\label{rem:stab}
\hfill
\begin{enumerate}
\item Recall that a sheaf $\sE$ is called \emph{polystable} if $\sE \simeq \sE_1 \oplus \cdots \oplus \sE_k$ for some stable sheaves $\sE_i$ with $\mu(\sE_1) = \cdots = \mu(\sE_k)$.
By the definition, we have the following implications for a torsion free sheaf $\sE$:
\[
\text{stable} \Longrightarrow \text{polystable} \Longrightarrow \text{semistable}.
\]
Note that, for an indecomposable sheaf $\sE$, its stability is equivalent to the polystability.
\item Let $X$ be a Fano manifold with $\rho_X =1$.
In the rest of this paper, we will only consider the polarization on $X$ given by the fundamental divisor $H_X$.
Note that the tangent bundle $\Theta _X$ of $X$ is indecomposable by \cite[Proposition~3.1]{CP02}.
Thus, the stability of $\Theta_X$ is equivalent to its polystability, and hence it is also equivalent to the existence of a Hermitian-Einstein metric on $\Theta_X$ by the Kobayashi-Hitchin correspondence.
\end{enumerate}
\end{remark}

Let $\sE$ be a torsion free sheaf on $X$.
Assume that $\sE$ is not semistable.
Then, by the definition, $\sE$ admits a saturated subsheaf $\sF$ with $\mu (\sF) > \mu (\sE)$.
It is well known that the set
\[
\{\, \mu(\sF) \mid \text{$\sF$ is a subsheaf of $\sE$} \,\}
\]
is bounded from above, and there exists a unique maximal subsheaf $\sE_{\max}$, called the \emph{maximal destabilizing subsheaf} of $\sE$, which attains the maximum slope:
\[
\mu(\sE_{\max}) = \max \{\, \mu(\sF) \mid \text{$\sF$ is a subsheaf of $\sE$} \,\}.
\]

Note that, by the maximality of $\sE_{\max}$, the sheaf $\sE_{\max}$ is semistable and saturated in $\sE$.

\begin{remark}[Invariance under the action of the automorphism group]\label{rem:inv}
Let $X$ be a smooth projective variety and $\sE$ a torsion free sheaf on $X$.
An \emph{automorphism} of the pair $(X,\sE)$ is defined as a pair $(g,\varphi)$  of isomorphisms $g \colon X \to X$ and $\varphi \colon \sE \to g^* \sE$.
Then, by the uniqueness of the maximal destabilizing subsheaves, the subsheaf $\sE_{\max}$ is preserved by any automorphism of the pair $(X,\sE)$, i.e., $\sE_{\max} = g^* \sE_{\max}$ via the identification $\varphi$.
\end{remark}

\subsection{Foliations and algebraicity of leaves}
Now we restrict our attention to the case of tangent bundles and recall several concepts regarding foliations.

\begin{definition}[Foliation]
Let $X$ be a smooth projective variety.
A \emph{foliation} $\sF$ on $X$ is a saturated subsheaf $\sF \subset \Theta _X$ that is closed under the Lie bracket, i.e., $[\sF,\sF] \subset \sF$, where $[\blank,\blank]$ is the Lie bracket.

We will denote by $\Sing\sF$ the closed subset on which $\Theta_X/\sF$ is not locally free.
Thus, on $X\setminus \Sing \sF$, the sequence
\[
0 \to \sF \to \Theta_X \to \Theta_X/\sF \to 0
\]
is an exact sequence of \emph{vector bundles}. 

Let $\sF$ be a foliation or, more generally,  a subsheaf of $\Theta _X$.
Assume that an algebraic group $G$ acts on $X$.
Then $\sF$ is said to be \emph{$G$-invariant} if, for any $g \in G$, the image of the natural composition map
 \[
 \sF \to \Theta_X \to g^*\Theta_X
 \]
coincides with $g^*\sF$.
\end{definition}

Let $\sF$ be a foliation on $X$.
Then the Frobenius theorem on integrability says that, if $x \in X \setminus \Sing\sF$, then there exist an analytic open neighborhood $U$ of $x \in X$ and a closed analytic submanifold $L_U \subset U$ such that $\Theta_{L_U} = \sF|_{L_U}$ as subsheaves in $\Theta_X|_{L_U}$.
A (connected) complex manifold $Y$ or, more precisely, a pair $(Y,\iota)$ of a complex manifold $Y$ and a holomorphic map $\iota \colon Y \to X \setminus \Sing\sF$ is called an \emph{integral manifold} if $\iota$ is a one-to-one immersion and its differential $\mathrm{d}\iota$ identifies $\Theta_Y$ with $\iota^* \sF$.
Then the global version of the Frobenius theorem asserts that, for $x \in X\setminus\Sing \sF$,  there exists a unique maximal integral manifold $L_x$ that contains $x$, called \emph{leaf} through $x \in X$ (cf. \cite[Theorem 1.64]{War83}).
By an abuse of notation, we will denote by the same symbol $L_x$ the image of $L_x$ in $X$.
\begin{definition}[Leaves and their algebraicity]
\hfill
\begin{enumerate}
 \item A leaf $L$ is called \emph{algebraic} if $L$ is open in its Zariski closure in $X$.
 \item A foliation $\sF$ is said to be \emph{algebraically integrable} if a leaf $L$ through a general point $x \in X$ is algebraic.
\end{enumerate}
\end{definition}

As is observed by Miyaoka \cite{Miy87a,Miy87b}, the maximal destabilizing subsheaf of the tangent bundle $\Theta_X$ plays  an important role in the classification theory of algebraic varieties. 
An easy but fundamental observation is that the maximal destabilizing subsheaf  of $\Theta_X$ defines a foliation if it satisfies a weak positivity condition (see, e.g., \cite[Lemma~9.1.3.1]{SB92} for a proof):
\begin{proposition}[Maximal destabilizing subsheaves of tangent bundles]\label{prop:MDS}
Let $X$ be a smooth projective variety and $\Theta_{X,\max}$ the maximal destabilizing subsheaf of $\Theta _X$.
If $\mu (\Theta_{X,\max}) >0$, then $\Theta_{X,\max}$ defines a foliation.
\end{proposition}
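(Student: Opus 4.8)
The plan is to use the standard fact that the failure of a saturated subsheaf of $\Theta_X$ to be closed under the Lie bracket is measured by an $\cO_X$-linear tensor, and then to force that tensor to vanish by a slope comparison. Write $\sF \coloneqq \Theta_{X,\max}$; since $\sF$ is saturated in $\Theta_X$, the quotient $\sQ \coloneqq \Theta_X/\sF$ is torsion free, so proving that $\sF$ is a foliation amounts to showing $[\sF,\sF] \subseteq \sF$ inside $\Theta_X$. First I would observe that, for local sections $s,t$ of $\sF$ and a local function $f$, the Leibniz rule gives $[fs,t] = f[s,t] - t(f)\,s$ and $[s,ft] = f[s,t] + s(f)\,t$, and the correction terms $t(f)\,s$ and $s(f)\,t$ lie in $\sF$; hence the composite of the Lie bracket with the projection $\Theta_X \to \sQ$ is $\cO_X$-bilinear and alternating, and therefore descends to a morphism of sheaves
\[
\phi \colon \textstyle\bigwedge^2 \sF \longrightarrow \sQ .
\]
By construction $\sF$ is involutive if and only if $\phi = 0$, so it remains to rule out $\phi \neq 0$.

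Next I would carry out the slope estimates (with respect to the fixed ambient polarization, $n = \dim X$). On the one hand, $\sF$ is semistable — every maximal destabilizing subsheaf is — so over $\bC$ the sheaf $\sF^{\otimes 2}$ is semistable, and $\bigwedge^2\sF$, being a direct summand of $\sF^{\otimes 2}$, is semistable with $\mu(\bigwedge^2\sF) = 2\mu(\sF)$ (one may assume $\rank\sF \geq 2$, the rank-one case being trivial since then $\bigwedge^2\sF = 0$). On the other hand, every nonzero subsheaf $\sG \subseteq \sQ$ satisfies $\mu(\sG) < \mu(\sF)$: if $\sF' \subseteq \Theta_X$ denotes the preimage of $\sG$, then $0 \to \sF \to \sF' \to \sG \to 0$ exhibits $\mu(\sF')$ as a weighted mean of $\mu(\sF)$ and $\mu(\sG)$, so $\mu(\sG) \geq \mu(\sF)$ would produce a subsheaf $\sF' \supsetneq \sF$ with $\mu(\sF') \geq \mu(\sF) = \mu(\Theta_{X,\max})$, contradicting the defining maximality of $\Theta_{X,\max}$ (first in slope, then, in the equality case and after passing to the saturation, in inclusion). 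Equivalently one may just invoke $\mu_{\max}(\sQ) < \mu(\sF)$ from Harder--Narasimhan theory.

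Finally I would combine the two estimates. If $\phi \neq 0$, its image is a nonzero subsheaf of $\sQ$ and simultaneously a quotient of the semistable sheaf $\bigwedge^2\sF$, hence has slope at least $2\mu(\sF)$; but by the previous paragraph it has slope strictly less than $\mu(\sF)$. Therefore $2\mu(\sF) < \mu(\sF)$, i.e.\ $\mu(\sF) < 0$, contradicting the hypothesis $\mu(\Theta_{X,\max}) > 0$. Hence $\phi = 0$, so $[\sF,\sF] \subseteq \sF$ and $\sF$ is a foliation.

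I expect the only genuinely non-elementary ingredient — and thus the point deserving care — to be the semistability of $\bigwedge^2\sF$: this rests on the theorem that in characteristic zero tensor (and exterior) powers of semistable sheaves are again semistable, not on any formal manipulation. Everything else — the Leibniz computation defining $\phi$, the weighted-mean inequality for subsheaves of $\Theta_X/\sF$, and the closing numerical contradiction, which is exactly where the hypothesis $\mu(\Theta_{X,\max}) > 0$ enters — is routine.
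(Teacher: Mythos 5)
Your proof is correct and follows essentially the same route as the argument the paper relies on (the one from \cite[Lemma~9.1.3.1]{SB92}, reproduced in the proof of Proposition~\ref{prop:nots}): the Lie bracket induces an $\cO_X$-linear O'Neil tensor $\bigwedge^2\sF \to \Theta_X/\sF$, which must vanish because $\bigwedge^2\sF$ is semistable of slope $2\mu(\sF)$ while every nonzero subsheaf of $\Theta_X/\sF$ has slope strictly less than $\mu(\sF)$, and $\mu(\sF)>0$. The only cosmetic difference is that the paper works with $(\bigwedge^2\sF)^{\vee\vee}$ to sidestep possible torsion in $\bigwedge^2\sF$, a refinement you could add but which does not affect the argument.
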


Also, it was observed by Bogomolov-McQuillan or Bost \cite{BM01,Bos01} that the positivity of a foliation also ensures the algebraicity of leaves, which answers a question of Miyaoka \cite[Remark~8.9]{Miy87b} (cf.\ \cite{MP97} and \cite{SB92}):

\begin{theorem}[Algebraicity of leaves]\label{thm:AL}
Let $X$ be a smooth projective variety and $\sF$ a foliation.
Assume that a projective curve $C$ is contained in $X\setminus \Sing\sF$ and $\sF|_C$ is ample.
Then any leaf $L_x$ through $x \in C$ is algebraic.
\end{theorem}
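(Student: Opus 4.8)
The plan is to recognize Theorem~\ref{thm:AL} as the algebraicity-of-leaves criterion of Bogomolov--McQuillan and Bost, and to deduce it from Bost's algebraization theorem for smooth formal subschemes with ample normal bundle. The overall shape: work inside the formal completion $\widehat{X}_C$ of $X$ along $C$; integrate $\sF$ formally along $C$ to produce a smooth formal subscheme $\widehat{V}\supseteq C$; observe that $N_{C/\widehat{V}}$ is a quotient of $\sF|_C$, hence ample; apply Bost's criterion to conclude that $\widehat{V}$ is the formal completion along $C$ of a genuine closed subvariety $Y\subseteq X$, smooth along $C$; and finally identify the leaf $L_x$ through $x\in C$ with a Zariski-dense open subset of $Y$.

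First I would carry out the formal integration. Since $C\cap\Sing\sF=\emptyset$, the inclusion $\sF\hookrightarrow\Theta_X$ is a subbundle inclusion in a formal (indeed analytic) neighborhood of $C$, and $[\sF,\sF]\subseteq\sF$, so the formal Frobenius theorem applies near $C$. When $C$ is tangent to $\sF$, i.e.\ $\Theta_C\subseteq\sF|_C$, one lets $\widehat{V}$ be the formal leaf containing $C$, a smooth formal subscheme of dimension $\rank\sF$; in general one sweeps the formal leaves along $C$ to obtain a smooth formal subscheme (of dimension $\rank\sF+1$ when $C$ is transverse to $\sF$). In either case $\Theta_{\widehat{V}}|_C=\sF|_C+\Theta_C$ inside $\Theta_X|_C$, and therefore
\[
N_{C/\widehat{V}}=\Theta_{\widehat{V}}|_C/\Theta_C\ \cong\ \sF|_C/(\sF|_C\cap\Theta_C)
\]
is a quotient of $\sF|_C$, hence ample, since quotients of ample vector bundles are ample.

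Next I would invoke Bost's algebraization criterion: a smooth formal subscheme of $\widehat{X}_C$ that contains the projective curve $C$ and has ample normal bundle $N_{C/\widehat{V}}$ is algebraic, so that $\widehat{V}=\widehat{Y}_C$ for a closed subvariety $Y\subseteq X$ smooth along $C$. Because $\widehat{Y}_C=\widehat{V}$ is tangent to $\sF$ to all orders along $C$, the sheaf $\sF$ is contained in $\Theta_Y$ over an analytic neighborhood of $C$ in $Y$; hence a piece of the transcendental leaf $L_x$ lies in $Y$. In the tangent case $\dim Y=\rank\sF=\dim L_x$, so this piece is open in the irreducible variety $Y$, whence $\overline{L_x}=\overline{Y}$ is projective and $L_x$ is open in it, i.e.\ algebraic. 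In the transverse case one has instead a codimension-one foliation on $Y$ induced by $\sF$, and a further argument (running the same algebraization inside $Y$, as in \cite{Bos01}) is needed to bound $\dim\overline{L_x}$ and conclude.

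The genuine obstacle, which I would not try to reprove, is Bost's algebraization criterion itself: passing from a formal subscheme with positive normal bundle to an honest subvariety. This is the substance of \cite{BM01,Bos01} and relies either on arithmetic estimates for evaluations of global sections, or on a Hilbert-scheme argument in which the ampleness bounds the family of leaves; it is far from routine. The only parts I would check by hand are the normal-bundle computation above and the final matching of the leaf with $Y$ --- of which the codimension-one (transverse) case is the delicate one, since there one cannot simply read off that a Zariski-dense portion of the leaf lies in $Y$.
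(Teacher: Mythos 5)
The paper offers no proof of this statement: it is quoted directly from \cite{BM01,Bos01} (with \cite{KSCT07} cited as an account), so your sketch is measured against a citation rather than an in-paper argument, and it does reproduce the standard proof behind those references, correctly isolating Bost's algebraization criterion for formal subschemes with ample normal bundle as the one input not to be reproved. Your hand-checked step is essentially right as well, with the small caveat that when $C$ is neither everywhere tangent nor everywhere transverse to $\sF$ the intersection $\sF|_C\cap\Theta_C$ need not be a subbundle, so $N_{C/\widehat V}$ is only a \emph{generically} surjective image of $\sF|_C$; ampleness then follows from Hartshorne's degree criterion on curves (every quotient bundle of $N_{C/\widehat V}$ receives a generically surjective map from the ample $\sF|_C$, hence has positive degree) rather than from ``quotients of ample bundles are ample'' taken verbatim.
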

See also \cite{KSCT07} for an account of the above theorem.

\subsection{Definition of $\Phi_G$}
Let $X$ be a Fano manifold (with $\rho_X =1$) and assume that $\Theta _X$ is not semistable.
Then, by Proposition~\ref{prop:MDS}, the maximal destabilizing subsheaf $\Theta_{X,\max}$ defines a foliation on $X$.
By the Mehta-Ramanathan restriction theorem, the restriction of $\Theta_{X,\max}$ to a general complete intersection curve $C = \bigcap_{i=1}^{n-1} D_i$, where $D_i \in \left|m_i H_X\right|$ for $m_i \gg0$, is still a semistable bundle on $C$.
Then, by Hartshorne's criterion \cite[Theorem~2.4]{Har71}, the bundle $\Theta_{X,\max}|_C$ is ample, and hence $\Theta_{X,\max}$ defines an algebraically integrable foliation.
Note that, by Remark~\ref{rem:inv}, this foliation is $\Aut(X)$-invariant.
Summarizing, we have a natural $\Aut(X)$-invariant algebraically integrable foliation $\Theta_{X,\max}$, if $\Theta _X$ is not semistable.
\begin{definition}[The set $\Phi^G$]
Let $X$ be a Fano manifold with $\rho_X=1$ and assume that an algebraic group $G$ acts on $X$.
Then we define:
\[
\Phi^G \coloneqq
\{\, \sF \subsetneq \Theta_X \mid \text{$\sF$ is a $G$-invariant algebraically integrable foliation}\,\}.
\]
Note that we assume $\sF \neq \Theta_X$ in the above definition.

Also, for a real number $a \in \bR$, we define $\Phi^{G}_{>a}$ (resp.\ $\Phi^{G}_{\geq a}$) as the subset of $\Phi^G$ consisting of the foliations with $\mu(\sF) >a$ (resp. $\mu(\sF) \geq a$).

We also define:
\[
 \phi^G \coloneqq \max \{\, \mu(\sF) \mid \sF \in \Phi_{G} \,\}.
\]
\end{definition}

Then, by the discussion above, we have the following:
\begin{proposition}[$\Phi^G_{> \mu(\Theta_X)}$ and semistability of $\Theta_X$]\label{prop:notss}
Let $X$ be a Fano manifold with $\rho_X =1$ and $G$ an algebraic group acting on $X$.
Assume that $\Theta _X$ is not semistable.
Then $\Phi^G_{> \mu(\Theta_X)}$ is non-empty.
\end{proposition}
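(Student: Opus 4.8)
The plan is essentially to assemble the results recalled in the paragraph preceding the statement; the proposition is a summary of that discussion. First I would invoke the standard theory of the maximal destabilizing subsheaf: since $\Theta_X$ is not semistable, there is a unique maximal destabilizing subsheaf $\Theta_{X,\max}$, which is saturated, semistable, and satisfies $\mu(\Theta_{X,\max}) > \mu(\Theta_X)$; in particular $\Theta_{X,\max} \subsetneq \Theta_X$. Because $X$ is Fano and the polarization is $H_X$, one has $\mu(\Theta_X) = \frac{-K_X \cdot H_X^{\dim X -1}}{\dim X} > 0$, so $\mu(\Theta_{X,\max}) > 0$ as well, and Proposition~\ref{prop:MDS} applies: $\Theta_{X,\max}$ defines a foliation on $X$.

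Next I would establish algebraic integrability. By the Mehta--Ramanathan restriction theorem, for $m_i \gg 0$ and general members $D_i \in \left|m_i H_X\right|$ the restriction of $\Theta_{X,\max}$ to the complete intersection curve $C = \bigcap_{i=1}^{\dim X -1} D_i$ is still semistable. Such a general $C$ is contained in $X \setminus \Sing \Theta_{X,\max}$ for codimension reasons, and $\deg(\Theta_{X,\max}|_C) = \bigl(c_1(\Theta_{X,\max}) \cdot H_X^{\dim X -1}\bigr)\prod_i m_i > 0$ since $\mu(\Theta_{X,\max}) > 0$. A semistable bundle of positive degree on a smooth projective curve is ample by Hartshorne's criterion \cite[Theorem~2.4]{Har71}, so $\Theta_{X,\max}|_C$ is ample; Theorem~\ref{thm:AL} then shows that the leaf through a general point of $C$ is algebraic, hence $\Theta_{X,\max}$ is algebraically integrable.

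Finally, for $G$-invariance I would note that the $G$-action on $X$ factors through $\Aut(X)$, and by the uniqueness of the maximal destabilizing subsheaf together with Remark~\ref{rem:inv}, the subsheaf $\Theta_{X,\max}$ is preserved by every automorphism of $X$, in particular by every element of $G$. Thus $\Theta_{X,\max}$ is a $G$-invariant, algebraically integrable foliation with $\mu(\Theta_{X,\max}) > \mu(\Theta_X)$, i.e.\ $\Theta_{X,\max} \in \Phi^G_{> \mu(\Theta_X)}$, so this set is non-empty.

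There is no genuine obstacle here; the argument is a bookkeeping exercise combining Proposition~\ref{prop:MDS}, Mehta--Ramanathan, Hartshorne's ampleness criterion, Theorem~\ref{thm:AL}, and Remark~\ref{rem:inv}. The only points requiring a word of care are that the Fano condition forces $\mu(\Theta_X) > 0$ (so that both the positivity hypothesis of Proposition~\ref{prop:MDS} and the positivity of $\deg(\Theta_{X,\max}|_C)$ hold), and that a general complete intersection curve of the above type avoids the codimension $\geq 2$ locus $\Sing \Theta_{X,\max}$, so that Theorem~\ref{thm:AL} can be applied on it.
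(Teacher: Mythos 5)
Your proof is correct and is essentially the paper's own argument: the paper proves this proposition via the discussion immediately preceding the definition of $\Phi^G$, using Proposition~\ref{prop:MDS}, Mehta--Ramanathan plus Hartshorne's criterion and Theorem~\ref{thm:AL} for algebraic integrability, and Remark~\ref{rem:inv} for invariance, exactly as you do. The extra care you take (positivity of $\mu(\Theta_X)$ from the Fano condition and the general complete intersection curve avoiding $\Sing\Theta_{X,\max}$) only makes explicit points the paper leaves implicit.
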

The following proposition ensures that a similar set $\Phi^G_{\geq \mu(\Theta_X)}$ is also non-empty if $\Theta_X$ is not \emph{stable}:

\begin{proposition}[$\Phi^G_{\geq \mu(\Theta_X)}$ and stability of $\Theta_X$]\label{prop:nots}
Let $X$ and $G$ be as in Proposition \ref{prop:notss}.
Assume that $\Theta _X$ is not \emph{stable}.
Then $\Phi^G_{\geq \mu(\Theta_X)}$ is non-empty.
\end{proposition}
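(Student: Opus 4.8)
The plan is to mimic the argument preceding the definition of $\Phi^G$ that established Proposition~\ref{prop:notss}, replacing the maximal destabilizing subsheaf (which is only available when $\Theta_X$ is genuinely unstable) by the \emph{socle}. First I would dispose of the easy case: if $\Theta_X$ is not even semistable, Proposition~\ref{prop:notss} already produces a foliation in $\Phi^G_{>\mu(\Theta_X)} \subseteq \Phi^G_{\geq\mu(\Theta_X)}$ and we are done. So I may assume that $\Theta_X$ is semistable but not stable, and set $\mu \coloneqq \mu(\Theta_X) = \frac{-K_X\cdot H_X^{\,n-1}}{n}$, which is $>0$ by the Fano condition. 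Recall that $\Theta_X$ is indecomposable (Remark~\ref{rem:stab}), so, being non-stable, it is not polystable either.

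Next I would introduce $\sF \subset \Theta_X$, the \emph{socle} of $\Theta_X$: the sum of all stable subsheaves of $\Theta_X$ of slope $\mu$, equivalently the largest polystable subsheaf of slope $\mu$. The properties I need are: (i) $\sF \neq 0$ --- indeed, among the proper saturated subsheaves of $\Theta_X$ of slope $\mu$ (which exist since $\Theta_X$ is not stable), one of minimal rank must be stable, by transitivity of saturation together with the semistability of $\Theta_X$; (ii) $\sF$ is saturated in $\Theta_X$ --- a standard double-dual argument shows the socle is reflexive and saturated, and in any case one may simply replace $\sF$ by its (still canonical) saturation; (iii) $\sF \neq \Theta_X$ --- otherwise $\Theta_X=\sF$ would be polystable, contradicting indecomposability; (iv) $\sF$ is semistable of slope $\mu$, since every subsheaf of $\sF$ is a subsheaf of $\Theta_X$ and hence has slope $\leq \mu$; and (v) $\sF$ is $G$-invariant --- the socle is intrinsically attached to the pair $(X,\Theta_X)$, so the uniqueness argument of Remark~\ref{rem:inv}, applied to the pairs $(g,\diff g)$ for $g \in G$, gives $g^*\sF = \sF$.

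It then remains to check that $\sF \in \Phi^G$. For the closedness of $\sF$ under the Lie bracket I would argue via the second fundamental form: on $X\setminus\Sing\sF$ the Lie bracket induces an $\cO_X$-linear map $\bigwedge^2\sF \to \Theta_X/\sF$ whose vanishing is equivalent to $[\sF,\sF]\subseteq\sF$, and it extends over $X$ because $\Theta_X/\sF$ is torsion free. Restricting the exact sequence $0\to\sF\to\Theta_X\to\Theta_X/\sF\to0$ to a general complete intersection curve $C = D_1\cap\cdots\cap D_{n-1}$ with $D_i\in|m_iH_X|$, $m_i\gg0$, the Mehta--Ramanathan theorem makes $\sF|_C$ and $(\Theta_X/\sF)|_C$ semistable bundles on $C$; hence $\bigwedge^2(\sF|_C)$ is semistable of slope $2\mu_C$ while $(\Theta_X/\sF)|_C$ is semistable of slope $\mu_C$, where $\mu_C \coloneqq \mu(\Theta_X|_C) > 0$. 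Since $2\mu_C > \mu_C$ there is no nonzero sheaf homomorphism between them, and as $C$ is general this forces the second fundamental form to vanish; thus $\sF$ is a foliation. Moreover $\sF|_C$, being semistable of positive degree, is ample by Hartshorne's criterion \cite[Theorem~2.4]{Har71}, so by Theorem~\ref{thm:AL} the foliation $\sF$ is algebraically integrable. Hence $\sF\in\Phi^G$ with $\mu(\sF)=\mu=\mu(\Theta_X)$, so $\Phi^G_{\geq\mu(\Theta_X)}\neq\emptyset$.

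I expect the main obstacle to be conceptual rather than computational: choosing the right canonical subsheaf. The maximal destabilizing subsheaf is unavailable, and an arbitrary slope-$\mu$ subsheaf need not be $G$-invariant; the socle fixes this because it is canonical, but then one must verify it is genuinely a foliation, and that is exactly where the Fano hypothesis re-enters --- through the strict inequality $2\mu_C > \mu_C$, valid only because $\mu(\Theta_X)>0$. The residual technical nuisances (reflexivity/saturatedness of the socle, and the behaviour of $\bigwedge^2\sF$ along $\Sing\sF$) are standard and can be dealt with by passing to reflexive hulls and torsion-free quotients on a general curve.
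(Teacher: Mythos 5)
Your proposal is correct, and its overall strategy coincides with the paper's: reduce to the semistable-but-not-stable case via Proposition~\ref{prop:notss}, produce a proper $G$-invariant saturated semistable subsheaf of slope $\mu=\mu(\Theta_X)$, and then show it is an algebraically integrable foliation by the slope comparison $2\mu>\mu$ (this is where the Fano hypothesis enters, exactly as you say). The one genuine difference is how $G$-invariance is obtained. The paper does not invoke the socle; it starts from a single stable saturated subsheaf $\sE_1$ of slope $\mu$ and, whenever some $g^*\sE_1\not\subset\sE_1$, shows that $\sE_1\cap g^*\sE_1=0$ (stability of $g^*\sE_1$ plus semistability of $\Theta_X$), so that adding translates produces direct sums; the process terminates by rank and yields an explicit $G$-invariant polystable subsheaf, which is then shown to be saturated by comparing it with its saturation (a generic isomorphism of reflexive sheaves of equal slope). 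Your appeal to the socle is cleaner and subsumes this, but it quietly relies on the same facts the paper proves by hand (that slope-$\mu$ stable subsheaves of a semistable sheaf intersect trivially or coincide, whence the sum is polystable and unique/maximal), plus the saturatedness issue you flag; note also that replacing the socle by its saturation stays proper because a full-rank slope-$\mu$ polystable subsheaf would force $\Theta_X$ to be polystable after double duals, contradicting Remark~\ref{rem:stab} (and your phrase ``contradicting indecomposability'' should really read ``polystable and indecomposable implies stable, contradiction''). For the foliation step the paper kills the O'Neil tensor by a double-dual argument directly on $X$ rather than restricting to a general complete intersection curve, and it is terser about algebraic integrability, but your Mehta--Ramanathan/Hartshorne/Theorem~\ref{thm:AL} route is exactly the mechanism the paper uses in the discussion preceding the definition of $\Phi^G$, so nothing essential is gained or lost there.
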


\begin{proof}
By Proposition~\ref{prop:notss}, we may assume that $\Theta _X$ is semistable but not stable.
By Remark \ref{rem:stab}, $\Theta_X$ is not polystable.
Set $\mu \coloneqq \mu(\Theta_X)$.

\begin{claim}
There exists a $G$-invariant semistable saturated subsheaf $\sE \subsetneq \Theta_X$ with $\mu(\sE) = \mu$.
\end{claim}
\begin{proof}[Proof of Claim]
Since $\Theta _X$ is semistable but not stable, we can find a stable saturated subsheaf $\sE_1 \subsetneq \Theta_X$ such that $\mu(\sE_1) = \mu$.
Note that $\sE_1$ is reflexive.
If $\sE_1$ is $G$-invariant, then we have nothing to prove.
Otherwise we can find an element $g_1 \in G$ such that $g_1^*\sE_1 \not \subset \sE_1$ and hence $ \sE_1 \cap g_1^*\sE_1 \neq g_1^*\sE_1$.

Consider the following exact sequence:
\[
0 \to \sE_1 \cap g_1^*\sE_1 \to \sE_1 \oplus g_1^*\sE_1 \to \sE_1 + g_1^*\sE_1\to 0.
\]
Since  $\sE_1 \oplus g_1^*\sE_1$ is reflexive and since $\sE_1 + g_1^*\sE_1$ is torsion free, it follows that $ \sE_1 \cap g_1^*\sE_1$ is reflexive.
Assume $ \sE_1 \cap g_1^*\sE_1 \neq 0 $. Then $\mu( \sE_1 \cap g_1^*\sE_1) < \mu$ by the stability of $g_1^*\sE_1$.
This implies that $\mu(\sE_1 + g_1^*\sE_1) > \mu$, which contradicts to the semistability of $\Theta_X$.
Thus we have $ \sE_1 \cap g_1^*\sE_1 = 0 $, and hence $\sE_1 + g_1^*\sE_1$ is a direct sum in $\Theta_X$.
Set $\sE_2 \coloneqq \sE_1 + g_1^*\sE_1$.

If $\sE_2$ is not $G$-invariant, then we can find $g_2 \in G$ such that $g_2^*\sE_1 \not \subset \sE_2$.
Then by considering the following sequence
\[
0 \to  \sE_2 \cap g_2^*\sE_1  \to \sE_2 \oplus g_2^*\sE_1 \to \sE_2 + g_2^*\sE_1\to 0,
\]
we see that $\sE_3 \coloneqq \sE_2 + g_2^*\sE_1$ is a direct sum in $\Theta_X$ again.
Eventually this procedure terminates, and produces a $G$-invariant polystable subsheaf $\sE \subset \Theta_X$ with $\mu(\sE) = \mu$.
Since $\Theta_X$ is not polystable, we have $\sE \neq \Theta_X$.

Finally we will prove that $\sE$ is saturated in $\Theta_X$.
Note that $\sE$ is reflexive since it is a direct sum of reflexive sheaves.
Let $\widehat \sE$ be the saturation of $\sE$ in $\Theta_X$.
Then the semistability of $\Theta _X$ and $\sE$ implies $\mu(\widehat \sE) = \mu$.
Thus the map $\sE \to \widehat \sE$ is a generic isomorphism between reflexive sheaves with same slopes, and hence it is an isomorphism
\end{proof}

By the above claim, the following set is non-empty:
\[
\Phi \coloneqq \{\, \sE \subsetneq \Theta_X \mid \text{$\sE$ is a $G$-invariant semistable saturated subsheaf with $\mu(\sE) = \mu$} \,\}.
\]
Let $\sF$ be an element of $\Phi$.
\begin{claim}
$\sF$ is an algebraically integrable foliation.
\end{claim}
\begin{proof}[Proof of Claim]
By Theorem~\ref{thm:AL}, it is enough to see that $\sF$ defines a foliation or, equivalently, that the O'Neil tensor $\wedge^2 \sF \to \Theta_X/\sF $, which is induced from the Lie bracket, is a zero map.
For this, we will follow the argument of \cite[Lemma~9.1.3.1]{SB92}.

Note that, since $\Theta_X/\sF$ is torsion free, the natural map $\Theta_X/\sF \to (\Theta_X/\sF)^{\vee\vee}$ is injective.
Hence it is enough to prove that there are no nontrivial morphisms $(\wedge^2\sF)^{\vee\vee } \to (\Theta_X/\sF)^{\vee\vee}$.
Since $\sF$ is semistable, the sheaf $(\wedge^2 \sF)^{\vee\vee}$ is a semistable sheaf whose slope is $2\mu$.
On the other hand, the semistability of $\Theta _X$ together with the fact $\mu (\Theta_X/\sF) =\mu$ implies that $\Theta_X/\sF$ is also semistable, and so is $(\Theta_X/\sF)^{\vee\vee}$.
Since $\mu >0$, we have $2\mu >\mu$.
Thus there are no nontrivial morphisms $(\wedge^2 \sF)^{\vee\vee} \to (\Theta_X/\sF)^{\vee\vee} $.
\end{proof}
This completes the proof.
\end{proof}

\section{Pasquier's classification and their geometry}\label{sect:Pas}
Here we recall descriptions of Fano varieties in Pasquier's classification (=Theorem~\ref{thm:Pas}), based on \cite{Pas09} or \cite{GPPS19}.
Almost all results in this section can be found in the literature, except for the explicit calculations on the geometry of $\PG$ and $\PF$ (= Propositions~\ref{prop:bl_PF} and \ref{prop:bl_PG}), for which we will include a proof for the convenience of readers.

\subsection{Preliminaries: associated triples and notations}
In what follows, we will employ basic terminologies regarding algebraic groups and Lie algebras.
Let $R$ be a root system with its associated Dynkin diagram $D(R)$ (with respect to a choice of a set $\Delta$ of simple roots).
Denote by $I$ the index set of $\Delta$.
Thus each $i \in I$ corresponds to an element in $\alpha_i \in \Delta$.
We will denote by $\omega_i$ the fundamental weight corresponding to $\alpha_i$.
\begin{definition}[Associated triples]\label{def:triples}
Let $X$ be a Fano manifold as in Theorem~\ref{thm:Pas}.
Then we define its \emph{associated triple} $(D,\omega_Y,\omega_Z)$, which consists of a Dynkin diagram $D$ and weights $\omega_Y$ and $\omega_Z$, as follows:
\begin{itemize}
 \item For varieties as in Theorem~\ref{thm:Pas}~\ref{thm:Pas_h}, we simply associate the triple indicated in the theorem.
 Hence $(D,\omega_Y,\omega_Z)$ is one of the following triples:
 \begin{enumerate}
  \item $(B_n,\omega_{n-1},\omega_n)$ ($n \geq 3$);
  \item $(B_3,\omega_1,\omega_3)$;
  \item $(C_n,\omega_k,\omega_{k-1})$ ($n \geq 2$, $k  \in \{\,2,\dots,n\,\}$);
  \item $(F_4,\omega_2,\omega_3)$;
  \item $(G_2,\omega_1,\omega_2)$.
 \end{enumerate}
 \item For $\PF$, set $(D,\omega_Y,\omega_Z) \coloneqq (F_4,\omega_1,\omega_3)$.
 \item For $\PG$, set $(D,\omega_Y,\omega_Z) \coloneqq (A_1\times G_2, \omega_1,\omega_0 + \omega_2)$.
\end{itemize}
\end{definition}

Here we use the following convention for the labeling of nodes of Dynkin diagrams:
\begin{longtable}{lc}
 $B_n$ & $\dB$\\
 $C_n$ & $\dC$\\
 $F_4$ & $\dF$\\
 $G_2$ & $\dG$\\
 $A_1 \times G_2$ & $\dAG$\\
\end{longtable}

\begin{notation}
Let $(D,\omega_Y,\omega_Z)$ be an associated triple as in Definition~\ref{def:triples}.
In the following, $G$ denotes a (simply connected) semi-simple algebraic group whose Dynkin diagram is $D$.
More precisely, $G$ satisfies the following:
Let $\fg$ be the Lie algebra of $G$ and fix a Cartan subalgebra $\fh$ of $\fg$.
Then, by considering the Cartan decomposition of $\fg$ (with respect to $\fh$), we have the associated root system $R$ of $G$.
By choosing a set $\Delta$ of simple roots of $R$, we have the Dynkin diagram $D(R)$ of $R$.
Then we suppose the condition $D(R) = D$.

Furthermore, we use the following notations:
\begin{itemize}
 \item $V_Y$  (resp.\  $V_Z$) is the irreducible $G$-representation with highest weight $\omega_{Y}$ (resp.\ $\omega_Z$);
 \item $v_Y$ (resp.\ $v_Z$) is the corresponding highest weight vector;
 \item $[v_Y]$ (resp.\ $[v_Z]$) is the corresponding point in $\bP_{\sub}(V_{Y})$ (resp. $\bP_{\sub}(V_Z)$);
 \item $P_Y$ (resp.\ $P_Z$) is the stabilizer of $[v_Y]$ in $\bP_{\sub}(V_{Y})$ (resp.\ $[v_Z]$ in $\bP_{\sub}(V_{Z})$), which is the parabolic subgroup in $G$ corresponding to the weight $\omega_Y$ (resp.\ $\omega_Z$);
 \item $Y \coloneqq G/P_Y$ and $Z \coloneqq G/P_Z$ (later we will see that $Z$ is contained in $X$ as a closed orbit with respect to $\Aut^0(X)$, thus there will be no confusion with this notation, cf.\ Condition~\ref{cond:Pas});
 \item $P_{Y,Z}$ is the parabolic subgroup $P_Y \cap P_Z$;
 \item $p_Y \colon G/P_{Y,Z} \to G/P_Y =Y$ and $p_Z \colon G/P_{Y,Z} \to G/P_Z =Z$ are the natural projections.
\end{itemize}
Note that $P_Y$ is a \emph{maximal} parabolic subgroup, and this implies that $\Pic(Y) \simeq \bZ$.
We will denote by $\cO_Y(1)$ the ample generator of $\Pic(Y)$.
If $P_Z$ is also a \emph{maximal} parabolic subgroup, then we will use a similar notation.
\end{notation}

If $G$ is a semisimple algebraic group and $P$ is a maximal parabolic subgroup, then the Fano index of $G/P$ can be read off from the combinatoric data of the Dynkin diagram (see, e.g., \cite{Sno93}).

\subsection{Geometry of smooth horospherical varieties with Picard rank one}

Let $X$ be a Fano manifold as in Theorem~\ref{thm:Pas}~\ref{thm:Pas_h}.
We now turn to explain how we can recover the variety $X$ from the associated triple $(D,\omega_Y,\omega_Z)$. For more details we refer the reader to \cite{Pas09} and \cite{GPPS19}.

Note that, if $(D,\omega_Y,\omega_Z)$ is an  associated triple as in Theorem~\ref{thm:Pas}~\ref{thm:Pas_h}, then the projections $p_Y$ and $p_Z$ are projective space bundles.
Moreover, $p_Z^* \cO_{Z}(1)$ gives a relative tautological bundle of the projective bundle $p_Y$.
We will denote by $\sE_Y$ the bundle $(p_Y)_*p_Z^* \cO_{Y}(1)$.
Thus $G/P_{Y,Z}$ is isomorphic to the projectivization of $\sE_Y$ over $Y$.

\begin{proposition}[{\cite{Pas09}, \cite[Section~1.5]{GPPS19}}]\label{prop:bl_h}
Let $X$ be a Fano variety as in Theorem~\ref{thm:Pas}~\ref{thm:Pas_h} and $(D,\omega_Y,\omega_Z)$ the associated triple of $X$.
Then the following hold:
\begin{enumerate}
 \item $X$ is the $G$-orbit closure of the point $[v_Y \oplus v_Z] \in \bP_{\sub}(V_Y \oplus V_Z)$.
 \item The closed orbit of $X$ under the action of $\Aut^0(X)$ is isomorphic to $Z = G/P_Z$, which is naturally embedded into  $\bP_{\sub}(V_Y \oplus V_Z)$ as follows:
 \[
 G/P_Z = G\cdot[v_Z] \subset \bP_{\sub}(V_{Z}) \subset  \bP_{\sub}(V_Y \oplus V_Z).
 \]
 \item The blow up $\tX \coloneqq \Bl_Z X$ admits a contraction $\pi \colon \tX \to Y = G/P_Y$, which yields the  following diagram:
 \[
 \xymatrix{
 E \ar@{_{(}->}[d]  \ar[r]^-{\varphi|_E} & Z =G/P_Z \ar@{_{(}->}[d] \\
  \tX \ar[d]_-{\pi}  \ar[r]^-{\varphi} &  X\\
 Y  = G/P_Y,&
 }
 \]
 where $E$ is the exceptional divisor of the blow-up $\varphi$.
 \item $E \simeq G/P_{Y,Z}$.
 Moreover  $\pi|_E$ and $\varphi|_E$ are the natural projections $p_Y$ and $p_Z$.
 \item $\pi$ is a projective bundle given by $\bP(\sE_Y \oplus \cO_Y(1))$, and $E$ is the projective subbundle $\bP(\sE_Y) \subset \bP(\sE_Y \oplus \cO_Y(1))$.
 \item $\Pic (\tX) = \bZ \pi^*H_Y \oplus \bZ \varphi^*H_{X} $;
 \item $\xi_{\sE_Y \oplus \cO_Y(1)} = \varphi^*H_X$ and $E =\varphi^*H_X -\pi^* H_Y$.
\end{enumerate}
\end{proposition}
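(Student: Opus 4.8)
The plan is to construct the blow-up $\tX$ directly as a projective bundle over $Y$, to produce the contraction $\varphi\colon\tX\to X$ by a base-point-free linear system, and then to read off all seven assertions from this model; here (1) is, under this identification, essentially Pasquier's definition of the varieties in class~\ref{thm:Pas_h}, so I would simply take $X=\overline{G\cdot[v_Y\oplus v_Z]}\subset\bP_{\sub}(V_Y\oplus V_Z)$ from the outset. I would begin with the homogeneous preliminaries. By the paragraph preceding the proposition, both $p_Y\colon G/P_{Y,Z}\to Y$ and $p_Z\colon G/P_{Y,Z}\to Z$ are projective space bundles; write $G/P_{Y,Z}=\bP_Y(\sE_Y)$ with $\sE_Y=(p_Y)_*p_Z^*\cO_Z(1)$ and, symmetrically, $G/P_{Y,Z}=\bP_Z(\sE_Z)$ with $\sE_Z=(p_Z)_*p_Y^*\cO_Y(1)$, so that $\xi_{\sE_Y}=p_Z^*\cO_Z(1)$ and $\xi_{\sE_Z}=p_Y^*\cO_Y(1)$. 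As $\cO_Z(1)$ is very ample, $p_Z^*\cO_Z(1)$ is globally generated, hence $\sE_Y$ is globally generated; and the projection formula together with Borel--Weil gives $H^0(Y,\sE_Y)=H^0(Z,\cO_Z(1))=V_Z^\vee$ and $H^0(Y,\cO_Y(1))=V_Y^\vee$.

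Next set $\sF\coloneqq\sE_Y\oplus\cO_Y(1)$ and $\tX\coloneqq\bP_Y(\sF)$, with projection $\pi$ and relative tautological divisor $\xi\coloneqq\xi_{\sF}$. Standard projective-bundle theory gives $\Pic(\tX)=\bZ\pi^*H_Y\oplus\bZ\xi$, while the quotient $\sF\twoheadrightarrow\sE_Y$, with kernel $\cO_Y(1)$, exhibits $E\coloneqq\bP_Y(\sE_Y)$ as the prime divisor of class $\xi-\pi^*H_Y$ with $\xi|_E=\xi_{\sE_Y}=p_Z^*\cO_Z(1)$, and the quotient $\sF\twoheadrightarrow\cO_Y(1)$ gives a section $\sigma\cong Y$ disjoint from $E$. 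This already yields the projective-bundle parts of (5) and (6), and the second identity of (7) once $\xi$ is identified with $\varphi^*H_X$. Since $\sF$ is globally generated, so is $\xi$, and $H^0(\tX,\xi)=H^0(Y,\sE_Y)\oplus H^0(Y,\cO_Y(1))=(V_Y\oplus V_Z)^\vee$, so $|\xi|$ defines a $G$-equivariant morphism $\varphi\colon\tX\to\bP_{\sub}(V_Y\oplus V_Z)$. On the three canonical loci: $\varphi|_\sigma$ is the minimal embedding $Y=G/P_Y\hookrightarrow\bP_{\sub}(V_Y)$; $\varphi|_E$ is the morphism given by $|p_Z^*\cO_Z(1)|$, that is $p_Z\colon E\to Z=G\cdot[v_Z]\subset\bP_{\sub}(V_Z)$; and a general point of $\tX$ maps into the orbit $G\cdot[v_Y\oplus v_Z]$ (over the base point of $Y$, a point of the $\pi$-fibre lying off $\sigma\cup E$ maps to some $[v_Y+tv_Z]$ with $t\neq0$, and all such lie in one $G$-orbit because $\omega_Y\neq\omega_Z$). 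Since $G\cdot[v_Y\oplus v_Z]$ has dimension $\dim\tX$, it follows that $\varphi(\tX)=\overline{G\cdot[v_Y\oplus v_Z]}=X$; this is (1), and (2) with closed orbit $\varphi(E)=Z$, and $\xi=\varphi^*\cO(1)=\varphi^*H_X$ because $\cO(1)$ restricts on $X$ to the ample generator of $\Pic(X)=\Pic(\tX)/\bZ E$, completing (5)--(7).

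It remains to recognize $\varphi$ as the blow-up of $X$ along $Z$, which gives (3) and (4). Writing $c$ for the codimension of $Z$ in $X$, so $c=\rank\sE_Z$, one computes $\cO_{\tX}(E)|_E=(\xi-\pi^*H_Y)|_E=p_Z^*\cO_Z(1)-p_Y^*H_Y$; restricted to a fibre $\bP^{c-1}$ of $p_Z$ this equals $\cO_{\bP^{c-1}}(-1)$, because there $p_Z^*\cO_Z(1)$ is trivial and $p_Y$ is a linear embedding into $Y\hookrightarrow\bP_{\sub}(V_Y)$. Thus $E=\bP_Z(\sE_Z)\to Z$ is a $\bP^{c-1}$-bundle over the smooth variety $Z$ along whose fibres $\cO_{\tX}(E)$ restricts to $\cO(-1)$, so by the Fujiki--Nakano contractibility criterion there is a modification $\rho\colon\tX\to X'$ contracting $E$ onto a smooth codimension-$c$ subvariety $Z'\cong Z$, an isomorphism outside $E$, and equal to the blow-up of $X'$ along $Z'$. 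Finally $\xi$ is nef and trivial on the $p_Z$-fibres of $E$, hence of the form $\rho^*A'$ for an ample divisor $A'$ on $X'$, so $\varphi=\varphi_{|\xi|}$ factors as $\tX\xrightarrow{\rho}X'\hookrightarrow\bP_{\sub}(V_Y\oplus V_Z)$; this identifies $X'=\varphi(\tX)=X$ and $Z'=\varphi(E)=Z$, so $\varphi$ is the blow-up of $X$ along $Z$, giving (3) and (4) and, incidentally, the smoothness of $X$ and of $Z\subset X$.

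The main obstacle is bookkeeping rather than conceptual, and is concentrated in two places: (i) the case-by-case inspection of the five triples of Theorem~\ref{thm:Pas}~\ref{thm:Pas_h}, using the Dynkin-diagram combinatorics, to confirm that $p_Y$ and $p_Z$ are projective bundles and --- the crucial point for the conormal computation above --- that the $p_Z$-fibres $\bP^{c-1}$ sit linearly inside $Y$ in its minimal embedding; and (ii) the global generation of $\xi$ and the very ampleness of $H_X$, which together make $\varphi$ birational onto its image $X$. The remaining ingredients --- the projective-bundle calculus for $\Pic(\tX)$, the class of $E$ and $\cO_{\tX}(\xi)|_E$, the Borel--Weil identification of the global sections, and the Fujiki--Nakano criterion --- are standard.
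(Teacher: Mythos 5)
The paper itself gives no proof of this proposition: it is quoted from \cite{Pas09} and \cite[Section~1.5]{GPPS19}, and the author only supplies proofs for the two non-horospherical analogues, Propositions~\ref{prop:bl_PF} and \ref{prop:bl_PG}. Your strategy is essentially the one used there: realize the blow-up as a projectivized bundle over $Y=G/P_Y$, check that the $G$-stable divisor $E$ restricts to the relative $\cO(-1)$ on the fibres of $p_Z$, contract it by the Fujiki--Nakano criterion \cite{Nak70,FN71}, and then identify the blown-down variety with $X$. The homogeneous preliminaries you flag as bookkeeping (both $p_Y$ and $p_Z$ are projective bundles, with $p_Z^*\cO_Z(1)$, resp.\ $p_Y^*\cO_Y(1)$, serving as the opposite relative tautological bundle, plus the Borel--Weil identification of section spaces) are exactly what \cite{GPPS19} records, so that part of the outline is sound and in the spirit of the paper.

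There are, however, two genuine soft spots. First, the concluding identification: from the factorization $\varphi=\phi_{|A'|}\circ\rho$ you only obtain a \emph{finite} morphism $X'\to\varphi(\tX)$ defined by the ample divisor $A'$, and $X'\simeq X$ requires this map to be birational (equivalently $A'$ very ample, or degree one onto the normal image). You defer this to ``the very ampleness of $H_X$,'' which is circular: $H_X$ lives on the variety you are in the middle of identifying, and its very ampleness in $\bP_{\sub}(V_Y\oplus V_Z)$ is not among your inputs. The clean fix is the paper's own device in the proofs of Propositions~\ref{prop:bl_PF} and \ref{prop:bl_PG}: check that $X'$ is a smooth Fano two-orbit variety satisfying Condition~\ref{cond:Pas} with closed orbit $Z$, and invoke the uniqueness statement in Theorem~\ref{thm:Pas}~\ref{thm:Pas_h} to conclude $X'\simeq X$. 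Second, a smaller inaccuracy: a point of the $\pi$-fibre off $\sigma\cup E$ does not map to a point of the form $[v_Y+tv_Z]$; the fibre maps linearly onto $\bP_{\sub}(\bC v_Y\oplus W)$, where $W\subset V_Z$ is the $c$-dimensional linear span of the $p_Y$-fibre through $[v_Z]$, so the image point is $[v_Y+w]$ with $w\in W$ arbitrary, and such points need not lie in a single orbit for the reason you give. What you actually need, namely $\varphi(\tX)=\overline{G\cdot[v_Y\oplus v_Z]}$, follows more cheaply: $\varphi(\tX)$ is an irreducible closed $G$-stable set containing $[v_Y+v_Z]$ (since $v_Z\in W$), hence containing the orbit closure, and both have dimension $\dim\tX=\dim Y+\rank\sE_Y=\dim X$ by the numerology of Proposition~\ref{prop:bl_h_num}.
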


With the above proposition, one can compute several invariants of $X$.
We need the following data of numerical invariants for the proof of Theorem~\ref{thm:stab}:
\begin{proposition}[{see, e.g., \cite[Section~1.6]{GPPS19}}]\label{prop:bl_h_num}
Let $X$ be a Fano variety as in Theorem~\ref{thm:Pas}~\ref{thm:Pas_h} and $(D,\omega_Y,\omega_Z)$ the associated triple of $X$.
Then the numbers $\dim Y$, $c_1(Y)$,  $\dim Z$, $c_1(Z)$, $\dim X$ and $c_1(X)$ are as follows:
\renewcommand{\arraystretch}{1.8}
\begin{longtable}{l|cc}
 Associated triple& $\dim Y$ & $c_1(Y)$   \\
 \hline
 $(B_n,\omega_{n-1},\omega_{n})$ & $\dfrac{(n+4)(n-1)}{2}$& $n+1$ \\
 $(B_3,\omega_{1},\omega_{3})$ & $5$ & $5$ \\
 $(C_n,\omega_{k},\omega_{k-1})$ & $\dfrac{k(4n+1-3k)}{2}$& $2n+1-k$ \\
 $(F_4,\omega_{2},\omega_{3})$ &  $20$ & $5$ \\
 $(G_2,\omega_{1},\omega_{2})$ &  $5$ & $3$ \\
\end{longtable}
\begin{longtable}{l|cc}
 Associated triple&   $\dim Z$ & $c_1(Z)$  \\
 \hline
 $(B_n,\omega_{n-1},\omega_{n})$ & $\dfrac{n(n+1)}{2}$ & $2n$ \\
 $(B_3,\omega_{1},\omega_{3})$ & $6$ & $6$ \\
 $(C_n,\omega_{k},\omega_{k-1})$ & $\dfrac{(k-1)(4n+4-3k)}{2}$ & $2n+2-k$ \\
 $(F_4,\omega_{2},\omega_{3})$ & $20$ & $7$ \\
 $(G_2,\omega_{1},\omega_{2})$ & $5$ & $5$ \\
\end{longtable}
\begin{longtable}{l|cc}
 Associated triple&  $\dim X$ & $c_1(X)$ \\
 \hline
 $(B_n,\omega_{n-1},\omega_{n})$ & $\dfrac{n(n+3)}{2}$ & $n+2$ \\
 $(B_3,\omega_{1},\omega_{3})$ & $9$ & $7$ \\
 $(C_n,\omega_{k},\omega_{k-1})$ & $\dfrac{k(4n-3k+3)}{2}$ & $2n-k+2$ \\
 $(F_4,\omega_{2},\omega_{3})$ & $23$ & $6$ \\
 $(G_2,\omega_{1},\omega_{2})$ & $7$ & $4$ \\
\end{longtable}
\renewcommand{\arraystretch}{1}
\end{proposition}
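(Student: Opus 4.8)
The plan is to obtain the invariants of $Y=G/P_Y$ and $Z=G/P_Z$ from the root system of $G$ by the standard combinatorics of rational homogeneous spaces, and then to deduce those of $X$ from the birational description of $X$ in Proposition~\ref{prop:bl_h}. Write $i_Y,i_Z\in I$ for the nodes of $D$ with $P_Y=P_{i_Y}$ and $P_Z=P_{i_Z}$.

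First, since $P_Y$ and $P_Z$ are \emph{maximal} parabolic subgroups, $Y$ and $Z$ are homogeneous with $\Pic(Y)\simeq\Pic(Z)\simeq\bZ$. Then $\dim G/P_i$ equals the number of positive roots of $R$ whose expansion in the simple roots involves $\alpha_i$ with nonzero coefficient, while the Fano index $c_1(G/P_i)$ is the coefficient of $\cO(1)$ in $-K_{G/P_i}$, which is the sum of exactly those positive roots; both quantities are extracted purely combinatorially from $D$ (see \cite{Sno93}). Performing this computation for each of the five associated triples of Theorem~\ref{thm:Pas}~\ref{thm:Pas_h}, including the dependence on $n$ and on $k$ in the $C_n$ case, yields the first two tables.

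Next, for $\dim X$: by Proposition~\ref{prop:bl_h} the blow-up $\varphi\colon\tX\to X$ identifies $\tX$ with $\bP(\sE_Y\oplus\cO_Y(1))$ over $Y$, and its exceptional divisor is the prime divisor $E=\bP(\sE_Y)\simeq G/P_{Y,Z}$; hence
\[
\dim X=\dim\tX=\dim E+1=\dim G/P_{Y,Z}+1,
\]
where $\dim G/P_{Y,Z}$ is the number of positive roots whose expansion involves $\alpha_{i_Y}$ or $\alpha_{i_Z}$, again a combinatorial count. (Equivalently, $p_Y\colon G/P_{Y,Z}\to Y$ is a $\bP^{r-1}$-bundle with $r=\rank\sE_Y$, so $\dim X=\dim Y+r$ and $r=\dim G/P_{Y,Z}-\dim Y+1$.)

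Finally, for $c_1(X)$: since $\rho_X=1$ this is the Fano index of $X$, so $-K_X=c_1(X)H_X$. Put $c\coloneqq\operatorname{codim}_X Z=\dim X-\dim Z$ and $r\coloneqq\rank\sE_Y=\dim X-\dim Y$, so that $\pi\colon\tX\to Y$ is a $\bP^{r}$-bundle; fix one of its fibres $f\simeq\bP^{r}$. The ramification formula gives $K_{\tX}=\varphi^*K_X+(c-1)E$, and by Proposition~\ref{prop:bl_h} one has $\varphi^*H_X=\xi_{\sE_Y\oplus\cO_Y(1)}$ and $E=\varphi^*H_X-\pi^*H_Y$, whence
\[
-K_{\tX}=\bigl(c_1(X)-c+1\bigr)\varphi^*H_X+(c-1)\,\pi^*H_Y.
\]
Restricting to $f$, on which $\varphi^*H_X|_f=\cO_f(1)$, $\pi^*H_Y|_f=0$ and $-K_{\tX}|_f=-K_f=(r+1)\cO_f(1)$, we read off $c_1(X)-c+1=r+1$, that is,
\[
c_1(X)=r+c=2\dim X-\dim Y-\dim Z,
\]
and substituting the values already computed gives the last table. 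The only genuine difficulty here is the bookkeeping for the infinite families $B_n$ and $C_n$ (the latter carrying the free parameter $k$): the arguments are elementary throughout, but one has to keep the conventions straight — which node indexes which maximal parabolic, the Grothendieck convention for $\bP(\blank)$, and the precise embedding of $E$ inside $\tX$ — in order to land on the stated polynomial expressions.
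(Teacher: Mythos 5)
Your proposal is correct, and all the steps check out: the root-theoretic counts give the first two tables, $\dim X=\dim G/P_{Y,Z}+1$ follows from $E\simeq G/P_{Y,Z}$ being a divisor in $\tX$, and the ramification formula $K_{\tX}=\varphi^*K_X+(c-1)E$ combined with $\varphi^*H_X=\xi_{\sE_Y\oplus\cO_Y(1)}$, $E=\varphi^*H_X-\pi^*H_Y$ and restriction to a $\pi$-fibre $\bP^r$ correctly yields $c_1(X)=(\dim X-\dim Y)+(\dim X-\dim Z)$, which reproduces the last table in every case (including the $B_n$ and $C_n$ families). For comparison: the paper offers no proof of this proposition at all — it is quoted from \cite[Section~1.6]{GPPS19} — so your argument is a self-contained verification rather than a variant of an internal proof; it is, however, very much in the spirit of what the paper does prove later, since Lemma~\ref{lem:CF_num} derives the closely related identity $\dim X-\dim Z=c_1(Y)-c_1(\sE_Y)$ by restricting $-K_{G/P_{Y,Z}}$ to $p_Z$-fibres, and your relation $c_1(X)=r+c$ is consistent with that computation and with Propositions~\ref{prop:bl_PF} and \ref{prop:bl_PG}. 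The only caveat, which you already flag, is the node-labelling convention: with the paper's diagrams $G_2/P_1$ is the adjoint (contact) fivefold of index $3$ and $G_2/P_2$ is $\bQ^5$ of index $5$, and similarly the indices $5$ and $7$ for $F_4/P_2$ and $F_4/P_3$ must be read off in the paper's numbering; with those conventions fixed, your computation lands exactly on the stated tables.
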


\begin{remark}
Let $X$ be a Fano variety as in Theorem~\ref{thm:Pas}~\ref{thm:Pas_h} and $(D,\omega_Y,\omega_Z)$ be the associated triple.
Then the homogeneous variety $G/P_{Y,Z}$ admits two projective bundles $p_Y$ and $p_Z$, and $X$ is the \emph{smooth drum} associated to $G/P_{Y,Z}$ in the sense of \cite{ORSCW19}.
See \cite[Section~4]{ORSCW19} for a general treatment of drums and their relation to the torus actions.
\end{remark}

\subsection{Geometry of $\PF$}
Here we will describe the geometry of $\PF$ and its blow-up along the closed orbit.
We will denote by $F_4$ the exceptional group of type $F_4$ (with an abuse of notation).
Note that the closed orbit of $\PF$ is isomorphic to $Z = F_4/P_Z$ \cite[Proof of Proposition~2.13]{Pas09}.

Let $L(P_Y)$ be a Levi subgroup of $P_Y$.
Then $L(P_Y)$ is a reductive group whose commutator subgroup is a semisimple group of type $C_3$.
Recall that the simply connected semisimple algebraic group of type $C_3$ is isomorphic the symplectic group $\Sp(6)$ with respect to a symplectic vector space $(\bC^6,\omega \in \bigwedge^2 \bC^6)$.
Denote by $P(\omega_i)$ the parabolic subgroup corresponding to $\omega_i$.
Then, the homogeneous variety $C_3/P(\omega_1)$ is isomorphic to the symplectic Grassmann variety $\SG(6,1)$, which parametrizes the isotropic $1$-dimensional quotients of the symplectic vector space $(\bC^6,\omega)$.
Since every $1$-dimensional quotient of $\bC^6$ is isotropic, we have $\SG(6,1) \simeq \bP(\bC^6)$.

With this notation, the homogeneous variety $C_3/P(\omega_2)$ is naturally isomorphic to the symplectic Grassmann variety $\SG(6,2)$, which parametrizes the isotropic $2$-dimensional quotients of the symplectic vector space.
Then the variety $\SG(6,2)$ admits a natural embedding into the Grassmann variety $\Gr(6,2)$ and, under the Pl\"ucker embedding $\Gr(6,2) \to \bP(\wedge^2\bC^6)$, the subvariety $\SG(6,2)$ is the hyperplane section of $\Gr(6,2)$ corresponding to the symplectic form $\omega$.
Note that, under the natural action of $\Sp(6)$, the variety $\Gr(6,2)$ decomposes into two orbits $(\Gr(6,2)\setminus \SG(6,2)) \bigsqcup \SG(6,2)$.

\begin{proposition}[The blow-up of $\PF$]\label{prop:bl_PF}
The following hold:
\begin{enumerate}
 \item  The blow up $\tPF \coloneqq \Bl_Z \PF$ admits a contraction $\pi \colon \tPF \to Y = F_4/P_Y$, which yields the  following diagram:
 \[
 \xymatrix{
 E \ar@{_{(}->}[d]  \ar[r]^-{\varphi|_E} & Z = F_4/P_Z \ar@{_{(}->}[d] \\
  \tPF \ar[d]_-{\pi}  \ar[r]^-{\varphi} &  \PF \\
 Y  = F_4/P_Y,&
 }
 \]
 where $E$ is the exceptional divisor of the blow-up $\varphi$.
 \item $\pi$ is a smooth morphism whose fibers are isomorphic to $\Gr(6,2)$.
 \item $E$ is isomorphic to $F_4/P_{Y,Z}$.  Moreover  $\pi|_E$ and $\varphi|_E$ are the natural projections $p_Y$ and $p_Z$.
 \item $\dim \PF = 23$.
 \item $-K_{\PF} = 8 H_{\PF}$.
 \item $\Pic (\tPF) \simeq \bZ \pi^*H_Y \oplus \bZ \varphi^*H_{\PF} $.
 \item $E = - \pi ^* H_Y +  \varphi^* H_{\PF}$.
 \item $-K_{\pi} = -6 \pi ^* H_Y + 6 \varphi^* H_{\PF} = 6E$.
\end{enumerate}
\end{proposition}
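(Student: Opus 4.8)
The plan is to mimic, in this non-horospherical situation, the description of the horospherical two-orbit varieties recorded in Proposition~\ref{prop:bl_h}, supplying by hand the two ingredients that are not formal: the identification of the fibres of $\pi$ and the computation of the numerical invariants. The structural assertions (1) and (3) --- the existence of the second contraction $\pi\colon\tPF\to Y=F_4/P_Y$, the identification of the exceptional divisor $E$ with the double flag $F_4/P_{Y,Z}$, and the fact that $\pi|_E$ and $\varphi|_E$ are the projections $p_Y$ and $p_Z$ --- are part of Pasquier's analysis in \cite{Pas09}, and also follow from the observation that $\PF$ is the \emph{smooth drum} associated with $F_4/P_{Y,Z}$ in the sense of \cite{ORSCW19}. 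Alternatively one re-derives them exactly as in the horospherical case: $\PF$ is the $F_4$-orbit closure of $[v_Y\oplus v_Z]$ in $\bP_{\sub}(V_Y\oplus V_Z)$, and the linear projection $\bP_{\sub}(V_Y\oplus V_Z)\dashrightarrow\bP_{\sub}(V_Y)$ becomes, after blowing up the closed orbit $Z$, a genuine morphism onto $Y$; Condition~\ref{cond:Pas} is exactly what guarantees that one blow-up suffices and that the base locus of the projection is resolved precisely along $E$.

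The heart of the proof is step (2). Because $\pi$ contracts an extremal ray of $\tPF$ and $\rho_{\tPF}=2$, every fibre $F$ of $\pi$ has Picard number one; and since $\pi$ is $F_4$-equivariant over the homogeneous base $Y$, all fibres are isomorphic, so $\pi$ is a smooth morphism. Such a fibre $F$ is preserved by the Levi subgroup $L(P_Y)$, whose semisimple part is the symplectic group $\Sp(6)$ of type $C_3$ carried by the nodes $\{2,3,4\}$. The divisor $F\cap E$ is the fibre of $p_Y\colon E=F_4/P_{Y,Z}\to Y$, namely $P_Y/P_{Y,Z}$, which a direct Levi computation identifies with $C_3/P(\omega_2)=\SG(6,2)$ (node $3$ of $F_4$ being the middle node of the $C_3$ sub-diagram); and $F\setminus\SG(6,2)=F\cap X^{0}$ is a single $\Sp(6)$-orbit. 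Thus $F$ is a smooth Fano manifold with $\rho_F=1$ on which $\Sp(6)$ acts with exactly two orbits, the closed one being the divisor $\SG(6,2)$. As $\Gr(6,2)$ carries precisely this structure --- $\SG(6,2)$ is its Pl\"ucker hyperplane section and the open orbit is the affine homogeneous space $\Sp(6)/(\Sp(2)\times\Sp(4))$ --- and as such a completion is unique (which can be pinned down via the normal bundle of $\SG(6,2)$ together with $\rho_F=1$), I conclude $F\cong\Gr(6,2)$. In particular $\dim\PF=\dim Y+\dim\Gr(6,2)=15+8=23$, which is (4).

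For the numerical statements (5)--(8) I would argue as follows. Since $\operatorname{codim}_{\PF}Z=23-20=3$, the blow-up formula gives $-K_{\tPF}=\varphi^{*}(-K_{\PF})-2E$ and $\Pic(\tPF)=\bZ\,\varphi^{*}H_{\PF}\oplus\bZ\,E$; write $-K_{\PF}=mH_{\PF}$. Testing the unknown relation $\pi^{*}H_Y=a\,\varphi^{*}H_{\PF}+b\,E$ against a line $\ell_1$ in a fibre $F\cong\Gr(6,2)$ of $\pi$ (so $\pi^{*}H_Y\cdot\ell_1=0$ and $E\cdot\ell_1=\SG(6,2)\cdot\ell_1=1$) and against a line $\ell_2$ in a $\varphi$-exceptional $\bP^{2}$ over a point of $Z$ (so $\varphi^{*}H_{\PF}\cdot\ell_2=0$ and $E\cdot\ell_2=-1$), and using that $\varphi$ sends $\ell_1$ to a line of $\PF$ and that $\pi|_E=p_Y$ sends $\ell_2$ to a line of $Y$ --- the last two facts being visible from the orbit-closure model of $\PF$ and a short computation on the double flag $F_4/P_{Y,Z}$ --- one obtains $a=1$, $b=-1$, that is $E=\varphi^{*}H_{\PF}-\pi^{*}H_Y$, which is (7); since the change of basis between $\{\varphi^{*}H_{\PF},E\}$ and $\{\pi^{*}H_Y,\varphi^{*}H_{\PF}\}$ is unimodular, (6) follows. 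The standard index formula for rational homogeneous spaces gives $c_1(Y)=c_1(F_4/P(\omega_1))=8$ (the adjoint variety of $F_4$), hence $-K_{\tPF}=8\,\pi^{*}H_Y+(-K_{\pi})$; restricting to a fibre $F\cong\Gr(6,2)$, where $(-K_{\pi})|_{F}=-K_{\Gr(6,2)}=\cO(6)$, $E|_{F}=\cO(1)$ and $\pi^{*}H_Y|_{F}=0$, and comparing with $-K_{\tPF}|_{F}=(m-2)\,\cO(1)$ (note $\varphi^{*}H_{\PF}|_F=\cO(1)$ by (7)), forces $m=8$, which is (5); finally $-K_{\pi}=-K_{\tPF}-\pi^{*}(-K_Y)=(8\,\varphi^{*}H_{\PF}-2E)-8\,\pi^{*}H_Y=6\,\varphi^{*}H_{\PF}-6\,\pi^{*}H_Y=6E$, which is (8).

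I expect the only genuinely delicate point to be the fibre identification in step (2): proving that a smooth Fano manifold of Picard number one which is a two-orbit $\Sp(6)$-variety with closed orbit the divisor $\SG(6,2)$ must be $\Gr(6,2)$. Everything else is bookkeeping --- dimensions and Fano indices of rational homogeneous spaces of types $F_4$ and $C_3$, the canonical bundle formula for the blow-up of a smooth centre, and elementary intersection numbers with the two explicit test curves --- plus the structural input imported from \cite{Pas09}, where the one thing to verify with care is that the linear-projection construction really yields a morphism on $\tPF=\Bl_Z\PF$ whose exceptional locus is exactly $E$.
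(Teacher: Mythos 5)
Your proposal runs in the opposite direction from the paper's proof, and the step you yourself flag as ``the only genuinely delicate point'' is exactly the gap: you reduce everything to the claim that a smooth Fano manifold $F$ with $\rho_F=1$, acted on by $\Sp(6)$ with two orbits whose closed orbit is the divisor $\SG(6,2)$, must be $\Gr(6,2)$, and you only gesture at a proof (``pinned down via the normal bundle of $\SG(6,2)$ together with $\rho_F=1$''). That recognition statement is the real content of the proposition and is nowhere established; even the preliminary claim that every $\pi$-fibre has Picard number one does not follow formally from $\pi$ being an elementary contraction. The paper avoids this recognition problem entirely by constructing the candidate first: it takes the parabolic $Q$ of the fourth node, forms the rank-$6$ bundle $\sM=f_*g^*\cO_{F_4/Q}(1)$ on $Y$ with $c_1(\sM)=3$, shows that the Grassmann bundle $\Gr(\sM,2)\to Y$ is an $F_4$-two-orbit variety whose closed orbit $D\simeq F_4/P_{Y,Z}$ lies in $|\eta-{\pi'}^*H_Y|$ for $\eta=c_1(\det\sQ)$, contracts $D$ smoothly via $\eta$ (Fujiki--Nakano) to a smooth variety $X'$ satisfying Condition~\ref{cond:Pas} with closed orbit $Z$, and then invokes the \emph{uniqueness} in Theorem~\ref{thm:Pas}~\ref{thm:Pas_f} to conclude $X'\simeq\PF$, hence $\tPF\simeq\Gr(\sM,2)$; all of (1)--(8) are then read off from this explicit model. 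In other words, the only classification-type input ever used is Pasquier's theorem itself, not a new uniqueness result about equivariant completions of $\Sp(6)/(\Sp(2)\times\Sp(4))$.

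There is a second, structural, problem with your setup of items (1) and (3). The drum description and the orbit-closure model $\overline{F_4\cdot[v_Y\oplus v_Z]}\subset\bP_{\sub}(V_Y\oplus V_Z)$ are available only in the horospherical case~\ref{thm:Pas_h} (Proposition~\ref{prop:bl_h} and the remark following it are explicitly restricted to that case): since $v_Y$ and $v_Z$ are highest weight vectors for the same Borel, the orbit of $[v_Y\oplus v_Z]$ is horospherical, so its closure cannot be the non-horospherical $\PF$; moreover, for the triple $(F_4,\omega_1,\omega_3)$ the projection $p_Y$ is an $\SG(6,2)$-fibration rather than a projective bundle, so the drum construction does not even apply. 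Thus your ``alternative re-derivation'' of the contraction $\pi$ and of $E\simeq F_4/P_{Y,Z}$ collapses, and you are left citing \cite{Pas09} for facts that the paper explicitly says are not available in the literature in this explicit form --- this proposition is where they get proved. The numerical bookkeeping (4)--(8) is essentially sound (and your appeal to the orbit-closure model to see $\varphi^*H_{\PF}\cdot\ell_1=1$ is unnecessary: $\pi^*H_Y\cdot\ell_2=1$ and $E\cdot\ell_2=-1$ give $b=-1$, and then $a\,(\varphi^*H_{\PF}\cdot\ell_1)=1$ forces both factors to equal $1$), but it presupposes the unproved step (2).
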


\begin{proof}
Let $Q$ be the maximal parabolic subgroup corresponding to the fourth node of $F_4$
and denote by $f \colon F_4/(P_Y \cap Q) \to F_4/P_Y =Y$ and $g \colon F_4/(P_Y \cap Q) \to F_4/Q$ the natural projections.
Then any $f$-fiber is isomorphic to $C_3/P(\omega_1) \simeq \bP^5$, and $g^*\cO_{F_4/Q}(1)$ gives a relative tautological bundle of the $\bP^5$-bundle $f$.
Denote by $\sM$ the rank $6$ vector bundle $f_*g^*\cO_{F_4/Q}(1)$.
Then $F_4/(P_Y \cap Q) \simeq \bP(\sM)$ and the tautological divisor $\xi_\sM$ defines the contraction $g$.

Note that $g$ is a $\bQ^5$-bundle and $f^*H_Y$ restricts to the class of hyperplane section on each fiber $\bQ^5$, where $\bQ^5$ is the $5$-dimensional smooth hyperquadric.
Note also that $c_1(Y) =8$.
Thus, by adjunction and the canonical bundle formula for $\bP(\sM)$, we have
\[
-K_{\bQ^5} = -K_{\bP(\sM)}|_{\bQ^5} = (8-c_1(\sM)) H_{\bQ^5}.
\]
Thus we have $c_1(\sM) = 3$.

Now we will prove:
\begin{claim}
The blow up $\tPF$  is isomorphic to  the Grassmann variety $\Gr(\sM,2)$, which parametrizes two dimensional quotients of $\sM \otimes k(y)$ at each point $y \in Y$.
\end{claim}

\begin{proof}[Proof of Claim]
Since $\bP(\sM)$ is $F_4$-homogeneous, the variety $\Gr(\sM, 2)$ admits an action of $F_4$.
Note that, under the action of $F_4$, the variety $\Gr(\sM,2)$ contains $F_4/P_{Y,Z}$ as a closed orbit.
If we consider the subvariety $F_4/P_{Y,Z} \subset \Gr(\sM,2)$ as a divisor, we will denote it by $D$.

Let $\pi' $ be the natural projection $\Gr(\sM,2) \to Y$.
If we fix a point $y \in Y$ (corresponding to the unit of $F_4$), then the $\pi'$-fiber over $y$ is isomorphic to $\Gr(6,2)$, and this Grassmann variety admits an action of $P_Y$.
Then, under this action, the variety $\Gr(6,2)$ decomposes into two orbits $(\Gr(6,2)\setminus \SG(6,2)) \bigsqcup \SG(6,2)$.
This implies that $\Gr(\sM,2)$ is a two orbit variety with respect to the action of $F_4$, and the closed orbit is isomorphic to $F_4/P_{Y,Z}$.

Let $\sS$ (resp.\ $\sQ$) be the universal subbundle (resp.\ quotient bundle) on $\Gr(\sM,2)$.
Then the bundle $\det \sQ$ is a globally generated line bundle.
Note that $\det \sQ$ is not ample and restricts to $p_Z^* \cO_Z(1)$ on $F_4/P_{Y,Z}$.

Let $\eta$ be the divisor class corresponding to $\det \sQ$.
Then the line bundle $\det \sS$ corresponds to the divisor $-\eta + {\pi'} ^* c_1(\sM)$.
Since the relative tangent bundle $\Theta_{\pi'}$ is isomorphic to $\sQ \otimes \sS^{\vee}$, we have
\[
-K_{\pi'} = 6\eta - 2{\pi'}^*c_1(\sM) = 6\eta - 6{\pi'}^*H_Y.
\]
This implies that 
\[
-K_{\Gr(\sM,2)} = 6\eta + 2{\pi'}^*H_Y.
\]
Since $-K_{F_4/P_{Y,Z}} = 5p_Z^*H_Z + 3p_Y^*H_Y$, we see that the divisor $D$ on $\Gr(\sM,2)$ belongs to $|\eta - {\pi'}^*H_Y|$.
Therefore the restriction $-D|_D$ gives the relative tautological divisor for the $\bP^2$-bundle $p_Z$.
Thus the contraction $\varphi' \colon \Gr(\sM,2) \to X'$ defined by the divisor $\eta$ is the smooth blow-down along $D$, which is compatible with $p_Z$ \cite{Nak70,FN71}.
Therefore $X'$ satisfies Condition~\ref{cond:Pas} whose closed orbit is isomorphic to $Z$.
By Theorem~\ref{thm:Pas}, we have $X' \simeq \PF$.
\end{proof}
The rest of the assertion follows from what we have proved.
\end{proof}

\subsection{Geometry of $\PG$}
Similarly to the case of $\PF$, we will describe the geometry of $\PG$ and its blow-up along the closed orbit.
Here we also denote by $A_1\times G_2$ the simply connected semisimple algebraic group of type  $A_1\times G_2$ (with an abuse of notation).
Note that the closed orbit of $\PG$ is isomorphic to $Z =(A_1\times G_2) /P_Z \simeq  \bP^1 \times \bQ^5$ \cite[Proof of Proposition~2.13]{Pas09}.
In the following, we will denote by $K(G_2)$ the $5$-dimensional contact Fano manifold of type $G_2$, which is isomorphic to $Y= (A_1\times G_2) /P_Y$.

\begin{proposition}[The blow-up of $\PG$]\label{prop:bl_PG}
The following hold:
\begin{enumerate}
 \item  The blow up $\tPG \coloneqq \Bl_Z \PG$ admits a contraction $\pi \colon \tPG \to Y = (A_1\times G_2) /P_Y \simeq K(G_2)$, which yields the  following diagram:
 \[
 \xymatrix{
 E \ar@{_{(}->}[d]  \ar[r]^-{\varphi|_E} & Z \simeq\bP^1 \times \bQ^5 \ar@{_{(}->}[d] \\
 \tPG \ar[d]_-{\pi}  \ar[r]^-{\varphi} &  \PG \\
 Y = (A_1\times G_2) /P_Y \simeq K(G_2),&
 }
 \]
 where $E$ is the exceptional divisor of $\varphi$.
 \item $\pi$ is a $\bP^3$-bundle.
 \item $E$ is isomorphic to $ (A_1\times G_2) /P_{Y,Z}$ (the complete flag variety of type $A_1\times G_2$).  Moreover  $\pi|_E = p_Y$ and $\varphi|_E =p_Z$.
 \item $\dim \PG = 8$.
 \item $-K_{\PG} = 6 H_{\PG}$.
 \item $\Pic (\tPG) \simeq \bZ \pi^*H_Y \oplus \bZ \varphi^*H_{\PG} $.
 \item $E = - \pi ^* H_Y + 2 \varphi^* H_{\PG}$.
 \item $-K_{\pi} = -2 \pi ^* H_Y + 4 \varphi^* H_{\PG} =2E$.
\end{enumerate}
\end{proposition}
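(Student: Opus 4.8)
The plan is to follow the strategy of the proof of Proposition~\ref{prop:bl_PF}: realize $\tPG$ explicitly as a $\bP^3$-bundle over $Y\simeq K(G_2)$ carrying an $(A_1\times G_2)$-action with two orbits, recover $\PG$ as the target of the contraction attached to a natural non-ample divisor, and appeal to Theorem~\ref{thm:Pas} to identify that target.

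First I would set up the relevant bundle over $Y = G_2/P_Y$, which is the $5$-dimensional adjoint variety of $G_2$ (so $\Pic(Y)=\bZ H_Y$ and $c_1(Y)=3$). The flag variety $G_2/B$ of the $G_2$-factor has two $\bP^1$-bundle projections $p\colon G_2/B\to Y$ and $q\colon G_2/B\to\bQ^5 = G_2/P_2$, and $q^*\cO_{\bQ^5}(1)$ is a relative tautological divisor for $p$; set $\sM\coloneqq p_* q^*\cO_{\bQ^5}(1)$, a $G_2$-homogeneous rank-$2$ bundle with $\bP(\sM)\simeq G_2/B$ over $Y$ and $\xi_\sM = q^*\cO_{\bQ^5}(1)$. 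A short computation (restricting the canonical bundle formula for $\bP(\sM)$ to a $q$-fibre) gives $c_1(\sM)=H_Y$. Letting $W=\bC^2$ be the standard representation of the $A_1$-factor, I put $\sE\coloneqq\sM\otimes_{\bC} W$, a rank-$4$ bundle on $Y$ with $c_1(\sE)=2H_Y$ and the evident $(A_1\times G_2)$-linearization, and consider $\pi\colon\bP(\sE)\to Y$.

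The heart of the argument is the claim $\tPG\simeq\bP(\sE)$. Over a point $y\in Y$ the fibre $\bP(\sE\otimes k(y))=\bP(\bC^2\otimes\bC^2)$ is acted on by the stabilizer $P_Y$ through a quotient isomorphic to $\mathrm{SL}_2\times\mathrm{SL}_2$ — the two factors coming from the $A_1$-factor and from the semisimple part of the Levi of $P_Y\subset G_2$ — via the tensor representation; since $\bP(\bC^2\otimes\bC^2)$ has exactly two $\mathrm{SL}_2\times\mathrm{SL}_2$-orbits, the Segre quadric surface of decomposable tensors and its complement, and $Y$ is $G_2$-homogeneous, it follows that $\bP(\sE)$ is an $(A_1\times G_2)$-variety with two orbits, the closed one being the relative Segre variety $E\coloneqq\bP(\sM)\times\bP^1\simeq(A_1\times G_2)/P_{Y,Z}$, which is cut out fibrewise by the $\mathrm{SL}_2\times\mathrm{SL}_2$-invariant quadratic form (the canonical inclusion $\det\sM\hookrightarrow\Sym^2\sE$), so that $E\in|2\xi_\sE-\pi^*H_Y|$. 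Writing $\eta\coloneqq\xi_\sE$, which is base-point-free (as $\sM$, hence $\sE$, is globally generated) but not ample (it is trivial along a family of lines inside $E$), the divisor $\eta$ defines a birational contraction $\varphi\colon\bP(\sE)\to X'$. Using $-K_{\bP(\sE)}=4\eta+\pi^*H_Y$ together with $E\in|2\eta-\pi^*H_Y|$ and adjunction, one checks that $\varphi|_E\colon E\to Z\simeq\bP^1\times\bQ^5$ is a $\bP^1$-bundle with $-E|_E$ restricting to $\cO(1)$ on its fibres; the smooth blow-down criterion of Nakano and Fujiki--Nakano \cite{Nak70,FN71} then shows that $X'$ is a smooth projective variety and that $\varphi$ is the blow-up of $X'$ along $Z$. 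Moreover $X'\setminus Z\simeq\bP(\sE)\setminus E$ is a single $(A_1\times G_2)$-orbit, and $-K_{X'}$ is ample (writing $\eta=\varphi^*H$ one gets $-K_{X'}=6H$ by adjunction, and $H$ is ample because $\eta$ is nef, big, and positive on every non-contracted curve), so $X'$ is a Fano manifold satisfying Condition~\ref{cond:Pas}. Since the only variety in Theorem~\ref{thm:Pas} with closed orbit $\bP^1\times\bQ^5$ is $\PG$, we conclude $X'\simeq\PG$, and hence $\bP(\sE)\simeq\tPG$. All the assertions (1)--(8) then follow by bookkeeping; for instance $\varphi^*H_{\PG}=\eta$, while $K_{\tPG}=\varphi^*K_{\PG}+E$ (the discrepancy is $1$ since $Z$ has codimension $2$ in $\PG$) combined with $-K_{\tPG}=4\eta+\pi^*H_Y$ gives $-K_{\PG}=6H_{\PG}$, $E=-\pi^*H_Y+2\varphi^*H_{\PG}$, and $-K_\pi=4\eta-2\pi^*H_Y=2E$.

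The main obstacle is precisely this identification $\bP(\sE)\simeq\tPG$. The delicate points are: globalizing the $\mathrm{SL}_2\times\mathrm{SL}_2$ picture on the generic $\bP^3$-fibre to the two-orbit structure over $Y$ and determining the divisor class of $E$; verifying the hypotheses of the Nakano--Fujiki criterion — that the contraction of the non-ample divisor $\eta$ contracts exactly $E$, that $\varphi|_E$ is a $\bP^1$-bundle onto $Z$, and that $\cO_E(E)$ is the relative $\cO(-1)$; and finally recognizing $X'$ via Pasquier's classification. The remaining ingredients — the value $c_1(\sM)=H_Y$, the canonical bundle formulas, and the numerical items (1)--(8) — are routine.
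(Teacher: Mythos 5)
Your proposal is correct and follows essentially the same route as the paper: realize $\tPG$ as the projectivization of the rank-$4$ bundle $\sM\otimes\bC^2=\sM\oplus\sM$ over $Y\simeq K(G_2)$ with its $(A_1\times G_2)$-action and two orbits, contract the tautological class via the Fujiki--Nakano smooth blow-down criterion, and identify the target as $\PG$ through Pasquier's classification. The only cosmetic difference is that you obtain $E\in|2\xi-\pi^*H_Y|$ from the relative Segre quadric (the section $\det\sM\hookrightarrow\Sym^2\sE$), whereas the paper deduces it by adjunction from $-K_{(A_1\times G_2)/P_{Y,Z}}=2p_Y^*H_Y+2p_Z^*H_Z$; both are fine.
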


\begin{proof}
The proof proceeds similar to that of Proposition~\ref{prop:bl_PF}.
Let $Q_1$ (resp.\ $Q_2$) be the maximal parabolic subgroup corresponding to the first (resp.\ second) node of $G_2$.
Denote by $f \colon G_2/(Q_1 \cap Q_2) \to G_2/Q_1 \simeq Y$ and $g \colon G_2/(Q_1 \cap Q_2) \to G_2/Q_2 \simeq \bQ^5 $ the natural projections.
Then $f$ and $g$ are $\bP^1$-bundles.
Moreover $g^*\cO_{\bQ^5}(1)$ (resp.\  $f^*\cO_{Y}(1)$) gives a relative tautological bundle of $f$ (resp.\ $g$).
Denote by $\sM$ the rank $2$ vector bundle $f_*g^*\cO_{\bQ^5}(1)$.
Then $G_2/(Q_1\cap Q_2) \simeq \bP(\sM)$ and the tautological divisor $\xi_\sM$ defines the contraction $g$.
Note also that $c_1(Y) =3$ and $c_1(\sM) = 1$.

Now we prove:
\begin{claim}
$\tPG$  is isomorphic to $\bP(\sM \oplus \sM)$.
\end{claim}

\begin{proof}[Proof of Claim]

Since $\sM$ is a $G_2$-homogeneous vector bundle on $\sM$, the variety $\bP(\sM \oplus \sM)$ admits an action of $G_2$.
Moreover, by considering $\sM \oplus \sM$ as the tensor product $\cO_Y^{\oplus 2} \otimes \sM$, we have a natural action of $A_1 \times G_2$ on $\bP(\sM \oplus \sM)$.
Via the Segre embedding, we can equivariantly embed $\bP^1 \times  \bP(\sM) \simeq  (A_1\times G_2) /P_{Y,Z}$  into $\bP(\sM \oplus \sM)$.
Then one can check that, under the action of $A_1 \times G_2$, the variety $\bP(\sM \oplus \sM)$ decomposes into two orbits $\bP(\sM \oplus \sM) \setminus (\bP^1 \times  \bP(\sM)) \bigsqcup (\bP^1 \times  \bP(\sM))$.
When we consider the subvariety $\bP^1 \times  \bP(\sM)$ as a divisor, we will denote it by $D$.

Since $\sM$ is globally generated, the bundle $\sM \oplus \sM$ and hence the divisor $\xi_{\sM \oplus \sM}$ are globally generated.
Note that $\xi_{\sM\oplus\sM}$ restricts to $p_Z^* \cO_Z(1)$ on $\bP^1 \times  \bP(\sM) \simeq  (A_1\times G_2) /P_{Y,Z}$, where $\cO_Z(1)$ is the line bundle $\pr_1^*\cO_{\bP^1}(1) \otimes \pr_2^* \cO_{\bQ^5}(1)$ on $\bP^1 \times \bQ^5 =Z$.
We will denote by $H_Z$ the divisor class of the bundle $\cO_Z(1)$.

Let $\pi'$ be  the natural projection $\bP(\sM \oplus \sM) \to Y$.
By the canonical bundle formula for $\bP(\sM\oplus\sM)$, we have
\[
-K_{\pi'} = 4\xi_{\sM\oplus\sM}- {\pi'}^*c_1(\sM\oplus\sM)  = 4\xi_{\sM\oplus\sM}- 2{\pi'}^*H_Y.
\]
This implies that 
\[
-K_{\bP(\sM\oplus\sM)} = 4\xi_{\sM\oplus\sM} + {\pi'}^*H_Y.
\]
Since $-K_{(A_1\times G_2) /P_{Y,Z}} = 2 p_Y^*H_Y + 2p_Z^*H_Z$, we see that the divisor $D$ on $\bP(\sM\oplus\sM)$ belongs to $|2\xi_{\sM\oplus\sM} - {\pi'}^*H_Y|$.
Therefore the restriction $-D|_D$ gives a relative tautological divisor for the $\bP^1$-bundle $p_Z$.
Thus the contraction $\varphi' \colon  \bP(\sM\oplus\sM) \to  X'$ defined by the divisor $\xi_{\sM\oplus\sM}$ is the smooth blow-down which contracts the divisor $D$ compatibly with $p_Z$ \cite{Nak70,FN71}.
Therefore $X'$ satisfies Condition~\ref{cond:Pas} whose closed orbit is isomorphic to $Z$.
By Theorem~\ref{thm:Pas}, we have $X' \simeq \PG$.
\end{proof}
The rest of the assertion follows from the above claim.
\end{proof}

\begin{remark}[$\PG$ as a Mukai variety of genus seven]
\hfill
\begin{enumerate}
\item Originally the variety $\PG$ is defined as follows \cite[Definition~2.12]{Pas09}:
Let $\bO$ be the (complexified) octonions, and $\im \bO$ be the purely imaginary octonions.
We will denote by $(x \cdot y)$ the Cayley product on $\bO$.
Consider two elements $z_1$, $z_2 \in \im\bO$ such that $(z_1 \cdot z_1) = (z_1 \cdot z_2) =(z_2 \cdot z_2) =0$ and $[z_1] \neq [z_2]$ in $\bP_{\sub}(\im \bO)$.
Then the variety $\PG$ is the $(A_1 \times G_2)$-orbit closure of the point $[z_1 \oplus z_2]\in \bP_{\sub}(\im\bO \oplus \im\bO)$.

From this definition, one can show that
\[
\PG = \{\, [x_1\oplus x_2]  \in \bP_{\sub}(\im\bO \oplus \im\bO) \mid (x_1 \cdot x_1) = (x_1 \cdot x_2) =(x_2 \cdot x_2) =0 \,\}.
\]
It is well known (see, e.g., \cite[Example~2.15]{Tev05}) that (a component of)the orthogonal Grassmann variety $\OG(5,10)$ has a similar defining equation:
\[
\OG(5,10) = \{\, [x_1\oplus x_2]  \in \bP_{\sub}(\bO \oplus \bO) \mid (x_1\cdot \overline{x_1}) = (x_1\cdot \overline{x_2})= (x_2\cdot \overline{x_2}) \,\}.
\]
This implies that the variety $\PG$ is a codimension two linear section of $\OG(5,10)$, and hence the variety $\PG$ is a Mukai 8-fold of genus seven \cite{Muk89}.
Now the stability of $\Theta_{\PG}$ follows from \cite[Theorem~3]{PW95}.
Later we will provide a different proof of the stability of $\Theta_{\PG}$ based on our approach.

\item In \cite[Section~6]{Kuz18}, it is proved that there exist two isomorphic classes for Mukai $8$-folds with genus seven, and that one of these varieties $X_{\gen}$ degenerates to the other one $X_{\spe}$.
One can check that $\PG \simeq X_{\gen}$.
\end{enumerate}
\end{remark}

\section{Canonical foliations and stability of tangent bundles}\label{sect:CF}

\subsection{Canonical foliations}
Let $X$ be a Fano manifold as in Theorem~\ref{thm:Pas} and $Z$ its closed orbit (under the action of $\Aut^0(X)$).
Then, by Propositions~\ref{prop:bl_h}, \ref{prop:bl_PF} and \ref{prop:bl_PG}, we have the following diagram with a smooth morphism $\pi$:
\begin{equation}\label{eq:diagram}
\begin{gathered}
 \xymatrix{
 E \ar@{_{(}->}[d]  \ar[r]^-{\varphi|_E} & Z \ar@{_{(}->}[d] \\
 \tX = \Bl_Z X \ar[d]_-{\pi} \ar[r]^-{\varphi} & X \\
 Y. &
 }
\end{gathered}
\end{equation}

\begin{definition}[Canonical foliations]
The canonical foliation $\sF \subset \Theta _X$ is the foliation defined by the image of the map $\varphi_*\Theta_{\pi} \to \varphi_* \Theta_{\tX} \to \Theta_X$, i.e., the saturation of the image of $\varphi_*\Theta_{\pi}$ in $\Theta_X$.
\end{definition}

The following is a key lemma for Theorem~\ref{thm:stab}:

\begin{lemma}[A criterion of stability]\label{lem:stab}
Let $X$ be a Fano manifold as in Theorem~\ref{thm:Pas} and $\sF$ be the canonical foliation on $X$.
Then  $\Phi^{\Aut^0(X)} = \{\sF\}$.

Assume moreover that $\Theta_X$ is not stable (resp.\ not semistable).
Then $\Phi^{\Aut^0(X)}_{\geq \mu(\Theta_X)} = \{\sF\}$ (resp.\ $\Phi^{\Aut^0(X)}_{> \mu(\Theta_X)} = \{\sF\}$).
\end{lemma}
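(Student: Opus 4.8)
The plan is to translate the statement into a classification of $\Aut^0(X)$-invariant subalgebras and then to settle that classification using the explicit descriptions of $X$ from Section~\ref{sect:Pas}.

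First I would reduce everything to the open orbit. Fix $x_0$ in the open orbit $X^0$, let $H\subseteq\Aut^0(X)$ be its stabiliser (so $X^0\simeq\Aut^0(X)/H$), and set $\mathfrak s=\operatorname{Lie}H$. By Proposition~\ref{prop:bl_h_num} and Propositions~\ref{prop:bl_PF},~\ref{prop:bl_PG} the closed orbit $Z=X\setminus X^0$ has codimension at least $2$ in $X$, so a foliation, being a saturated subsheaf of $\Theta_X$, is determined by its restriction to $X^0$; and on the homogeneous space $X^0\simeq\Aut^0(X)/H$ an $\Aut^0(X)$-invariant subbundle of $\Theta_{X^0}$ is the same thing as an $\operatorname{Ad}(H)$-stable subspace $\mathfrak m$ with $\mathfrak s\subseteq\mathfrak m\subseteq\operatorname{Lie}\Aut^0(X)$, involutivity being equivalent to $\mathfrak m$ being a subalgebra. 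Thus $\sF'\mapsto\mathfrak m(\sF')$ is a bijection between the set of $\Aut^0(X)$-invariant foliations $\sF'\subsetneq\Theta_X$ and the set of subalgebras $\mathfrak m$ with $\mathfrak s\subsetneq\mathfrak m\subsetneq\operatorname{Lie}\Aut^0(X)$. Under it the canonical foliation $\sF$ corresponds to $\mathfrak p:=\operatorname{Lie}\bigl(\operatorname{Stab}_{\Aut^0(X)}(\text{the fibre of }X^0\to Y\text{ through }x_0)\bigr)$, because the restriction of $\sF$ to $X^0$ is precisely the relative tangent sheaf of $X^0\to Y$; and $\sF\in\Phi^{\Aut^0(X)}$, since the diagram~\eqref{eq:diagram} is $\Aut^0(X)$-equivariant, the leaf through a general point is an open subset of the image under $\varphi$ of a $\pi$-fibre (hence algebraic), and $\sF\neq\Theta_X$ as $\dim Y>0$. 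So it suffices to prove that $\mathfrak p$ is the \emph{only} subalgebra strictly between $\mathfrak s$ and $\operatorname{Lie}\Aut^0(X)$.

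The core of the argument, which I would carry out in two steps, is this classification. Since the semisimple group $G$ of Section~\ref{sect:Pas} is contained in $\Aut^0(X)$, any such $\mathfrak m$ meets $\fg$ in a subalgebra with $\mathfrak s\cap\fg\subseteq\mathfrak m\cap\fg\subseteq\fg$. Using the explicit form of $H$ coming from Section~\ref{sect:Pas} — in the horospherical cases $H\subseteq P_{Y,Z}$ and $\mathfrak s\cap\fg$ has corank one in $\mathfrak p_{Y,Z}=\operatorname{Lie}P_{Y,Z}$, while for $\PF$ and $\PG$ one reads $H$ off Propositions~\ref{prop:bl_PF} and~\ref{prop:bl_PG} — I would check, family by family, that the only subalgebras of $\fg$ lying strictly between $\mathfrak s\cap\fg$ and $\fg$ are the parabolic subalgebras containing $\mathfrak p_{Y,Z}$, namely $\mathfrak p_{Y,Z},\mathfrak p_Y,\mathfrak p_Z$ in the horospherical cases, and in the two remaining cases only $\mathfrak p_Y$ (there $H$ is large enough that $\mathfrak s\not\subseteq\mathfrak p_Z$). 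Concretely this amounts to verifying that one cannot adjoin a root vector to $\mathfrak s\cap\fg$ without enlarging its Cartan part, a short root-system computation. Secondly, in the horospherical cases the competitors $\mathfrak p_{Y,Z}$ and $\mathfrak p_Z$ must be discarded: because $\Aut^0(X)\supsetneq G$, a subalgebra $\mathfrak m$ with $\mathfrak m\cap\fg$ equal to one of them would have to contain $\mathfrak s$, hence would require a $(\mathfrak m\cap\fg)$-submodule of the $\fg$-module $\operatorname{Lie}\Aut^0(X)/\fg$ — whose structure is given by Pasquier's description of $\Aut^0(X)$ — containing the corresponding part of $\mathfrak s$; no such proper submodule exists, so no such $\mathfrak m$ lifts, whereas $\mathfrak p_Y$ does lift, to $\mathfrak p$. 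Hence $\mathfrak p$ is the unique intermediate subalgebra, so $\sF$ is the unique $\Aut^0(X)$-invariant proper foliation, and since it is algebraically integrable, $\Phi^{\Aut^0(X)}=\{\sF\}$.

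The refined statements are then formal: if $\Theta_X$ is not semistable then $\Phi^{\Aut^0(X)}_{>\mu(\Theta_X)}$ is nonempty by Proposition~\ref{prop:notss}, and if $\Theta_X$ is merely not stable then $\Phi^{\Aut^0(X)}_{\geq\mu(\Theta_X)}$ is nonempty by Proposition~\ref{prop:nots}; either way, being a subset of $\Phi^{\Aut^0(X)}=\{\sF\}$, it must equal $\{\sF\}$. I expect the main obstacle to be the subalgebra classification in the previous paragraph: unlike the rest of the proof it is not formal, it genuinely uses the explicit structure of the isotropy group $H$ in each of Pasquier's families, and in the non-reductive cases it hinges on knowing $\Aut^0(X)$ well enough to see that the parabolic foliations attached to $\mathfrak p_Z$ and $\mathfrak p_{Y,Z}$ fail to be $\Aut^0(X)$-invariant — the $G$-invariant classification alone would produce too many foliations.
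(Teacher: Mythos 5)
Your overall architecture (reduce to the open orbit, identify $\Aut^0(X)$-invariant foliations with Lie subalgebras $\mathfrak{m}$ intermediate between the isotropy algebra $\mathfrak{s}$ and $\operatorname{Lie}\Aut^0(X)$, then classify) is a genuinely different route from the paper, and the final reduction of the ``moreover'' part to Propositions~\ref{prop:notss} and \ref{prop:nots} coincides with the paper's. But as it stands there is a real gap: the entire content of your argument is the classification you defer (``check, family by family\dots a short root-system computation'' and ``no such proper submodule exists, so no such $\mathfrak{m}$ lifts''), and nothing in the proposal actually establishes it. This is not a routine verification: it has to be carried out for the infinite families $(B_n,\omega_{n-1},\omega_n)$ and $(C_n,\omega_k,\omega_{k-1})$ as well as the exceptional cases, it requires explicit knowledge of $\Aut^0(X)$ and of the isotropy group $H$ of a general point (neither of which is recorded in the paper; in particular $H$ for $\PF$ and $\PG$ is not simply ``read off'' Propositions~\ref{prop:bl_PF} and \ref{prop:bl_PG}), and one must also rule out intermediate subalgebras whose intersection with $\fg$ is not a parabolic containing $\mathfrak{p}_{Y,Z}$ (the subtorus $\mathfrak{t}_0\subset\mathfrak{s}\cap\fg$ does not separate all root spaces, e.g.\ for $G_2$ several negative roots restrict equally to $\ker(\omega_1-\omega_2)$, so ``adjoining a root vector'' is not the only possibility a priori). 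Note also that your plan proves a statement strictly stronger than the lemma: uniqueness of $\sF$ among \emph{all} invariant proper foliations, not just the algebraically integrable ones. The lemma itself gives no guarantee that this stronger statement is true (a hypothetical invariant, non--algebraically-integrable foliation would falsify your classification without contradicting the lemma), so the risk is concentrated exactly in the step you have not done.

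For comparison, the paper's proof never touches the isotropy algebras and uses algebraic integrability as the main lever: for $\sF'\in\Phi^{\Aut^0(X)}$ the leaf-closure map $f\colon X\dashrightarrow\Chow(X)$, $x\mapsto[\overline{L_x}]$, is $\Aut^0(X)$-equivariant; by equivariant resolution of indeterminacy \cite{RY02} together with Condition~\ref{cond:Pas} (the only invariant centre available is $Z$, and $\Bl_ZX$ again has just the two orbits $X^0$ and $E$), $f$ becomes a morphism on $\tX=\Bl_ZX$; since $\rho_{\tX}=2$ and the only fibre-type equivariant contraction of $\tX$ is $\pi$ (the other elementary contraction $\varphi$ being birational), the general leaves are the images of the $\pi$-fibres, i.e.\ $\sF'=\sF$. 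This is exactly the mechanism that replaces your subalgebra classification, and it is where the hypothesis that $\sF'$ is algebraically integrable and the two-orbit Condition~\ref{cond:Pas} are used. If you want to salvage your approach, you must either supply the case-by-case Lie-algebra computation in full (including the structure of $\Aut^0(X)$ in the horospherical cases, where the canonical foliation is spanned by the unipotent radical of $\Aut^0(X)$), or find a way to reinsert algebraic integrability so as to prove only what the lemma asserts.
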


\begin{proof}
Take an element $\sF' \in \Phi^{\Aut^0(X)}$ and denote by $L_x$ the leaf of $\sF'$ through a general point $x \in X$.
Since $\sF'$ is $\Aut^0(X)$-invariant, the leaves are preserved by the action of $\Aut^0(X)$, i.e., $g(L_x)=L_{g(x)}$ for any $g \in \Aut^0(X)$.
In particular, $g(\overline{L_x})=\overline{L_{g(x)}}$ for any $g \in \Aut^0(X)$.

Consider the quotient map $f \colon X \dashrightarrow \Chow(X)$ with respect to the foliation $\sF'$, which sends a general point $x \in X$ to the point $\Bigl[ \overline{L_x} \Bigr]$.
Then the map $f$ is $\Aut^0(X)$-equivariant with respect to the natural actions on $X$ and $\Chow(X)$.

Thus, by the equivariant version of the Hironaka resolution of indeterminacy \cite{RY02} and by virtue of Condition~\ref{cond:Pas}, we can resolve the map $f$ after blowing up $X$ along $Z$.
Now the first assertion is clear.

Assume that $\Theta_X$ is not stable (resp.\ not semistable).
Then, by Proposition~\ref{prop:nots} (resp.\ Proposition~\ref{prop:notss}), the set $\Phi^{\Aut^0(X)}_{\geq \mu(\Theta_X)}$ (resp.\ $\Phi^{\Aut^0(X)}_{> \mu(\Theta_X)}$) is non-empty, and the last assertion follows.
\end{proof}

\subsection{Proof of Theorem~\ref{thm:stab}}
Recall that we have diagram \eqref{eq:diagram}.
Note also that, if $X$ is a variety as in Theorem~\ref{thm:Pas}~\ref{thm:Pas_h}, then there is a bundle $\sE_Y $  on $Y$ such that $\tX \simeq \bP(\sE_Y \oplus\cO_Y(1))$ (Proposition~\ref{prop:bl_h}).  
In the following, we will denote by $\sG \coloneqq \sE_Y \oplus \cO_Y(1)$.
\begin{proposition}\label{prop:CF_num}
Let $X$ be a Fano manifold as in Theorem~\ref{thm:Pas}~\ref{thm:Pas_h}, and $\sF$ its canonical foliation.
Then $\rank \sF = \rank \sE_Y$ and $c_1(\sF) = (\rank \sE_Y -c_1(\sE_Y))H_X$.
\end{proposition}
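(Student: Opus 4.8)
The plan is to reduce the computation to the open orbit $X^{0}=X\setminus Z$, over which $\varphi$ is an isomorphism and the canonical foliation $\sF$ coincides with the relative tangent bundle $\Theta_{\pi}$, and then to read off $c_{1}(\Theta_{\pi})$ from the projective bundle structure $\tX\simeq\bP(\sG)$ with $\sG=\sE_{Y}\oplus\cO_{Y}(1)$.

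First I would note that $Z$ has codimension at least two in $X$ --- this is visible from Proposition~\ref{prop:bl_h_num}, or from the fact that the fibres of $\varphi|_{E}\colon E\to Z$ are positive dimensional --- so the restriction map $A^{1}(X)\to A^{1}(X\setminus Z)$ is an isomorphism, and the first Chern class of a torsion-free sheaf on $X$ is determined by its restriction to $X\setminus Z$. Over $X\setminus Z$ the morphism $\varphi$ is an isomorphism, and under this identification $\varphi_{*}\Theta_{\pi}$ restricts to the subbundle $\Theta_{\pi}|_{\tX\setminus E}$ of $\Theta_{\tX}|_{\tX\setminus E}\simeq\varphi^{*}\Theta_{X}|_{\tX\setminus E}$; being a subbundle, it is already saturated there, so $\sF|_{X\setminus Z}$ agrees with $\Theta_{\pi}|_{\tX\setminus E}$. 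In particular $\rank\sF=\rank\Theta_{\pi}$, which equals the relative dimension of $\pi$; since $\sG$ has rank $\rank\sE_{Y}+1$, this relative dimension is $\rank\sE_{Y}$, which gives the first assertion.

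For the first Chern class I would compute $c_{1}(\Theta_{\pi})$ on all of $\tX$ from the relative Euler sequence (equivalently, the canonical bundle formula) for $\pi\colon\bP(\sG)\to Y$, namely
\[
c_{1}(\Theta_{\pi})=(\rank\sE_{Y}+1)\,\xi_{\sG}-\pi^{*}c_{1}(\sG).
\]
Substituting $\xi_{\sG}=\varphi^{*}H_{X}$ and $c_{1}(\sG)=c_{1}(\sE_{Y})+H_{Y}$ from Proposition~\ref{prop:bl_h}, then restricting to $\tX\setminus E$ and using $\pi^{*}H_{Y}=\varphi^{*}H_{X}-E$ together with $E|_{\tX\setminus E}=0$, the two terms collapse to $(\rank\sE_{Y}-c_{1}(\sE_{Y}))\,\varphi^{*}H_{X}$ on $\tX\setminus E$. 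Transporting this back through the isomorphism $\varphi\colon\tX\setminus E\to X\setminus Z$ and extending uniquely over $Z$ gives $c_{1}(\sF)=(\rank\sE_{Y}-c_{1}(\sE_{Y}))\,H_{X}$, as claimed.

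Once this framework is set up the computation is purely formal; the only point that requires care --- and the one I regard as the crux --- is the identification of the canonical foliation with $\Theta_{\pi}$ over the open orbit, together with the codimension-two argument that allows one to ignore what happens over $Z$. Everything else is bookkeeping with the divisor classes $\xi_{\sG}$, $\varphi^{*}H_{X}$, $\pi^{*}H_{Y}$ and $E$ supplied by Proposition~\ref{prop:bl_h}.
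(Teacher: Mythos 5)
Your proposal is correct and follows essentially the same route as the paper: the paper also computes $c_1(\Theta_\pi)=(\rank\sE_Y+1)\xi_\sG-(c_1(\sE_Y)+1)\pi^*H_Y$, rewrites it via Proposition~\ref{prop:bl_h} as $(\rank\sE_Y-c_1(\sE_Y))\varphi^*H_X+(c_1(\sE_Y)+1)E$, and concludes because $\varphi$ contracts $E$. Your explicit codimension-two argument and the identification of $\sF$ with $\Theta_\pi$ over the open orbit are exactly what the paper compresses into its final sentence, so the two proofs coincide in substance.
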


\begin{proof}
The first assertion is clear from the construction of the canonical foliation $\sF$.

We now turn to prove the second assertion.
By the canonical bundle formula for projective bundles, we have
\[
c_1(\Theta_{\pi}) = (\rank \sE_Y+1)\xi_\sG -(c_1(\sE_Y) +1)\pi^*H_Y.
\]
By Proposition~\ref{prop:bl_h}, 
\begin{align*}
 c_1(\Theta_{\pi}) &= (\rank \sE_Y+1)\xi_\sG -(c_1(\sE_Y)+1) \pi^*H_Y\\
 &= (\rank \sE_Y-c_1(\sE_Y))\varphi^*H_X +(c_1(\sE_Y)+1)E.
\end{align*}
Now the second assertion follows from the facts that the foliation $\sF$ is determined by $\Theta_\pi$ and that $\varphi$ is the blow-down of the divisor $E$.
\end{proof}

\begin{lemma}\label{lem:CF_num}
Let $X$ be a Fano manifold as in Theorem~\ref{thm:Pas}~\ref{thm:Pas_h}.
Then, for each associated triple, the numbers $\rank \sE_Y$ and $c_1(\sE_Y)$ are as follows:
\renewcommand{\arraystretch}{1}
\begin{longtable}{l|cc}
 Associated triple & $\rank \sE_Y$ & $c_1(\sE_Y)$ \\
 \hline
 $(B_n,\omega_{n-1},\omega_n)$ & $2$ & $1$ \\
 $(B_3,\omega_1,\omega_3)$ & $4$ & $2$ \\
 $(C_n,\omega_k,\omega_{k-1})$ & $k$ & $k-1$ \\
 $(F_4,\omega_2,\omega_3)$ & $3$ & $2$ \\
 $(G_2,\omega_1,\omega_2)$ & $2$ & $1$ \\
\end{longtable}
\renewcommand{\arraystretch}{1}
\end{lemma}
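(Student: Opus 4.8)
The plan is to read both numbers off the numerical data in Proposition~\ref{prop:bl_h_num} by comparing the two descriptions of the anticanonical class of the blow-up $\tX=\Bl_ZX$ coming from Proposition~\ref{prop:bl_h}. I fix the usual identifications of first Chern classes with integers on $Y$ and on $X$ (so $-K_Y=c_1(Y)H_Y$, $-K_X=c_1(X)H_X$, and $c_1(\sE_Y)\in\bZ$ via $A^1(Y)\simeq\bZ$), and write $\sG\coloneqq\sE_Y\oplus\cO_Y(1)$. The rank is immediate: since $\tX\simeq\bP(\sG)$ is a projective bundle over $Y$ of relative dimension $\rank\sG-1=\rank\sE_Y$, we get $\dim X=\dim\tX=\dim Y+\rank\sE_Y$, so $\rank\sE_Y=\dim X-\dim Y$; substituting the dimension formulas of Proposition~\ref{prop:bl_h_num} and simplifying yields the first column (e.g.\ the difference collapses to $k$ in the $C_n$-case).

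For $c_1(\sE_Y)$ I would express $-K_{\tX}$ in the basis $\{\varphi^*H_X,\pi^*H_Y\}$ of $\Pic(\tX)$ in two ways. First, the canonical bundle formula for the projective bundle $\pi$ (in the Grothendieck convention already used in the proof of Proposition~\ref{prop:CF_num}), together with $\xi_\sG=\varphi^*H_X$, $c_1(\sG)=(c_1(\sE_Y)+1)H_Y$ and $-K_Y=c_1(Y)H_Y$, gives
\[
-K_{\tX}=(\rank\sE_Y+1)\,\varphi^*H_X+\bigl(c_1(Y)-c_1(\sE_Y)-1\bigr)\,\pi^*H_Y .
\]
Second, since $Z$ has codimension $\dim X-\dim Z$ in the smooth variety $X$, the blow-up formula for $\varphi$, together with $-K_X=c_1(X)H_X$ and $E=\varphi^*H_X-\pi^*H_Y$ (Proposition~\ref{prop:bl_h}), gives
\[
-K_{\tX}=\bigl(c_1(X)-\dim X+\dim Z+1\bigr)\,\varphi^*H_X+\bigl(\dim X-\dim Z-1\bigr)\,\pi^*H_Y .
\]
Because $\varphi^*H_X$ and $\pi^*H_Y$ freely generate $\Pic(\tX)$ (Proposition~\ref{prop:bl_h}), comparing the coefficients of $\pi^*H_Y$ gives the desired identity $c_1(\sE_Y)=c_1(Y)-\dim X+\dim Z$, while comparing the coefficients of $\varphi^*H_X$ recovers $\rank\sE_Y=\dim X-\dim Y$ (equivalently it is the relation $c_1(X)=2\dim X-\dim Y-\dim Z$, a consistency check). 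Substituting the values of $c_1(Y)$, $\dim X$, $\dim Z$ from Proposition~\ref{prop:bl_h_num} and simplifying case by case then yields the second column; the only substitution requiring more than one line is $C_n$, where $\dim X-\dim Z=2(n-k+1)$ and hence $c_1(\sE_Y)=(2n+1-k)-2(n-k+1)=k-1$.

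I do not expect a genuine obstacle: once the two expressions for $-K_{\tX}$ are in place, the remainder is bookkeeping. The points that need attention are the sign and normalization conventions in the canonical bundle formula for $\bP(\sG)$ (pinned down by the proof of Proposition~\ref{prop:CF_num}), keeping the integer identifications of the various first Chern classes consistent, and the appeal to Proposition~\ref{prop:bl_h} that $\{\varphi^*H_X,\pi^*H_Y\}$ is a free basis, which is what makes coefficient comparison valid. As an alternative one could instead identify $\sE_Y$ with an explicit homogeneous bundle on $Y$ in each of the five cases, but the uniform computation above is shorter.
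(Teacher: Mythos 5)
Your proposal is correct: the rank computation is exactly the paper's ($\rank\sE_Y=\dim X-\dim Y$ from the relative dimension of $\pi$), and your identity $c_1(\sE_Y)=c_1(Y)-(\dim X-\dim Z)$ is precisely the relation the paper derives, so the case-by-case substitutions go through (I checked the $C_n$ arithmetic as well). The route to that identity, however, is genuinely different. The paper works entirely on the exceptional divisor $E\simeq\bP(\sE_Y)=G/P_{Y,Z}$: it writes $-K_{G/P_{Y,Z}}=(\rank\sE_Y)\,p_Z^*H_Z+(c_1(Y)-c_1(\sE_Y))\,p_Y^*H_Y$ using $\xi_{\sE_Y}=p_Z^*H_Z$, and then restricts to a fiber $\bP^{\dim X-\dim Z-1}$ of the \emph{second} projective-bundle structure $p_Z$, so that the anticanonical degree of that projective space forces $\dim X-\dim Z=c_1(Y)-c_1(\sE_Y)$. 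You instead work on $\tX=\bP(\sE_Y\oplus\cO_Y(1))$ and equate the projective-bundle expression for $-K_{\tX}$ with the blow-up discrepancy formula $-K_{\tX}=\varphi^*(-K_X)-(\dim X-\dim Z-1)E$, using $\xi_{\sE_Y\oplus\cO_Y(1)}=\varphi^*H_X$, $E=\varphi^*H_X-\pi^*H_Y$ and the freeness of $\Pic(\tX)$ from Proposition~\ref{prop:bl_h}(6)--(7). Your version avoids invoking the two-bundle structure of $G/P_{Y,Z}$ (and the identification of $p_Z^*\cO_Z(1)$ as a tautological divisor for $p_Y$), at the cost of bringing in the blow-up formula and the basis of $\Pic(\tX)$; it also yields the relation $c_1(X)=2\dim X-\dim Y-\dim Z$ as a free consistency check against Proposition~\ref{prop:bl_h_num}, which the paper's argument does not use. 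Both arguments are equally rigorous given Propositions~\ref{prop:bl_h} and~\ref{prop:bl_h_num}; the paper's is marginally shorter, yours is marginally more self-contained on the $\tX$ side and matches the conventions already used in the proof of Proposition~\ref{prop:CF_num}.
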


\begin{proof}
By Proposition~\ref{prop:bl_h}, the exceptional divisor $E$ is isomorphic to $G/P_{Y,Z} \simeq \bP(\sE_Y)$.
Thus $\rank \sE_Y = \dim X - \dim Y$.

By the canonical bundle formula for $G/P_{Y,Z} \simeq \bP(\sE_Y)$, we have
\begin{align*}
 -K_{G/P_{Y,Z}}& = (\rank\sE_Y) \xi_{\sE_Y} +(c_1(Y) - c_1(\sE_Y)) p_Y^*H_Y \\
 &= (\rank\sE_Y) p_Z^*H_Z +(c_1(Y) - c_1(\sE_Y))p_Y^*H_Y.
\end{align*}
Recall that $p_Z$ is a projective bundle.
Thus, by restricting $ -K_{G/P_{Y,Z}}$ to a $p_Z$-fiber $\bP^{\dim X- \dim Z - 1}$, we have
 \[
 \dim X- \dim Z =c_1(Y)-c_1(\sE_Y),
 \]
and the assertions follow from Proposition~\ref{prop:bl_h_num}.
\end{proof}

Now we can determine the rank and the first Chern class of the canonical foliations for all varieties as in Theorem~\ref{thm:Pas}:
\begin{proposition}\label{prop:CF}
Let $X$ be a Fano manifold as in Theorem~\ref{thm:Pas}, and $\sF$ its canonical foliation.
Then, for each associated triple, the numbers $\rank \sF$ and $c_1(\sF)$ are as follows:
\begin{longtable}{l|cc}
 Associated triple & $\rank \sF$ & $c_1(\sF)$ \\
 \hline
 $(B_n,\omega_{n-1},\omega_n)$ & $2$ & $1$ \\
 $(B_3,\omega_1,\omega_3)$ & $4$ & $2$ \\
 $(C_n,\omega_k,\omega_{k-1})$ & $k$ & $1$ \\
 $(F_4,\omega_2,\omega_3)$ & $3$ & $1$ \\
 $(G_2,\omega_1,\omega_2)$ & $2$ & $1$ \\
 \hline
 $(F_4,\omega_1,\omega_3)$ & $8$ & $0$ \\
 $(A_1\times G_2,\omega_1,\omega_0 + \omega_2)$ & $3$ & $0$ \\
\end{longtable}
\end{proposition}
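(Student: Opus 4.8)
The plan is to treat the horospherical varieties of Theorem~\ref{thm:Pas}~\ref{thm:Pas_h} and the two exceptional two-orbit varieties $\PF$ and $\PG$ separately, since the former have essentially already been handled by the preceding results.

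For $X$ as in Theorem~\ref{thm:Pas}~\ref{thm:Pas_h}, I would simply combine Proposition~\ref{prop:CF_num} with Lemma~\ref{lem:CF_num}: feeding the tabulated values of $\rank\sE_Y$ and $c_1(\sE_Y)$ into the formulas $\rank\sF=\rank\sE_Y$ and $c_1(\sF)=(\rank\sE_Y-c_1(\sE_Y))H_X$ yields the first five rows of the table at once, with no further argument.

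For $\PF$ and $\PG$ the plan is to rerun the proof of Proposition~\ref{prop:CF_num}, replacing Proposition~\ref{prop:bl_h} by Propositions~\ref{prop:bl_PF} and \ref{prop:bl_PG}. By construction $\sF$ agrees, on $X\setminus Z$, with the relative tangent bundle $\Theta_\pi$ transported through the isomorphism $\varphi\colon\tX\setminus E\to X\setminus Z$; since $\Theta_\pi\subset\Theta_{\tX}$ is a subbundle ($\pi$ being smooth) and $Z$ has codimension $\geq 2$, this gives $\rank\sF=\rank\Theta_\pi$, which equals the dimension of a fibre of $\pi$. By Proposition~\ref{prop:bl_PF} this fibre is $\Gr(6,2)$, so $\rank\sF=\dim\Gr(6,2)=8$ for $\PF$; by Proposition~\ref{prop:bl_PG} it is $\bP^3$, so $\rank\sF=3$ for $\PG$. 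For the first Chern class, I would write $c_1(\Theta_\pi)=-K_\pi$ in the basis $\{\varphi^*H_X,E\}$ of $\Pic(\tX)$; because $\sF$ is determined by $\Theta_\pi$ and $\varphi$ contracts $E$, the coefficient of $\varphi^*H_X$ in this expression is exactly $c_1(\sF)$. Now Proposition~\ref{prop:bl_PF} gives $-K_\pi=6E$ and Proposition~\ref{prop:bl_PG} gives $-K_\pi=2E$, each a pure multiple of the exceptional divisor; hence the $\varphi^*H_X$-coefficient is $0$ and $c_1(\sF)=0$ in both cases. As a sanity check, $\rank\sF<\dim X$ in every row, so indeed $\sF\subsetneq\Theta_X$.

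I do not expect any genuine obstacle here: the geometric content lies entirely in Section~\ref{sect:Pas} together with Proposition~\ref{prop:CF_num} and Lemma~\ref{lem:CF_num}, so the statement is a matter of collation. The one point worth spelling out is that replacing the image of $\varphi_*\Theta_\pi$ by its saturation $\sF$ does not alter the first Chern class — valid because the two sheaves coincide outside the closed set $Z$, which has codimension $\geq 2$ — and this is precisely what lets one read $c_1(\sF)$ off the $\varphi^*H_X$-coefficient of $c_1(\Theta_\pi)$.
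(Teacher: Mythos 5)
Your proposal is correct and follows the paper's own route: the horospherical rows are exactly Proposition~\ref{prop:CF_num} plus Lemma~\ref{lem:CF_num}, and the $\PF$, $\PG$ rows are read off from Propositions~\ref{prop:bl_PF} and \ref{prop:bl_PG} in just the way the paper intends. Your explicit remarks that $\rank\sF$ equals the fibre dimension of $\pi$ and that saturation does not change $c_1$ (since it only differs from the image of $\varphi_*\Theta_\pi$ along $Z$, of codimension at least two) merely spell out details the paper leaves implicit.
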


\begin{proof}
For the case \ref{thm:Pas_h} of Theorem~\ref{thm:Pas}, the assertion follows from Proposition~\ref{prop:CF_num} and Lemma~\ref{lem:CF_num}.
For the remaining cases, the assertions are consequences of Proposition~\ref{prop:bl_PF} and Proposition~\ref{prop:bl_PG}.
\end{proof}

Now we can complete the proof of Theorem~\ref{thm:stab}.
\begin{proof}[Proof of Theorem~\ref{thm:stab}]
By Lemma~\ref{lem:stab}, it is enough to compare $\mu (\sF)$ with $\mu (\Theta_X)$ or, equivalently, $\dfrac{c_1(\sF)}{\rank\sF}$ with  $\dfrac{c_1(\Theta_X)}{\rank\Theta _X}$.
By Propositions~\ref{prop:bl_h_num}, \ref{prop:bl_PF}, \ref{prop:bl_PG} and \ref{prop:CF}, these numbers $\dfrac{c_1(\sF)}{\rank\sF}$ and $\dfrac{c_1(\Theta_X)}{\rank\Theta _X}$ are as follows, and we have the assertion:

\renewcommand{\arraystretch}{1.8}
\begin{longtable}{l|cc|c}
 Associated triple & $\dfrac{c_1(\sF)}{\rank\sF}$ & $\dfrac{c_1(\Theta_X)}{\rank\Theta _X}$ & $\dfrac{c_1(\sF)}{\rank\sF}> \dfrac{c_1(\Theta_X)}{\rank\Theta _X}$ ?\\
 \hline
 $(B_n,\omega_{n-1},\omega_n)$ & $1/2$ & $\dfrac{n+2}{n(n+3)/2}$ & {``$>$'' if and only if $n \geq 4$} \\
 $(B_3,\omega_1,\omega_3)$ & $1/2$ & $\dfrac{7}{9}$ & ``$<$'' \\
 $(C_n,\omega_k,\omega_{k-1})$ & $1/k$ & $\dfrac{2n-k+2}{k(4n-3k+3)/2}$ & ``$<$'' \\
 $(F_4,\omega_2,\omega_3)$ & $1/3$ & $\dfrac{6}{23}$ & ``$>$'' \\
 $(G_2,\omega_1,\omega_2)$ & $1/2$ & $\dfrac{4}{7}$ & ``$<$'' \\
 \hline
 $(F_4,\omega_1,\omega_3)$ & $0$ & $\dfrac{8}{23}$ & ``$<$'' \\
 $(A_1\times G_2,\omega_1,\omega_0 + \omega_2)$ & $0$ & $\dfrac{6}{8}$ & ``$<$'' \\
\end{longtable}
\renewcommand{\arraystretch}{1}
\end{proof}

\section{Several remarks}\label{sect:rem}

\begin{remark}[K\"ahler-Einstein metrics on $\PF$  and $\PG$]
 As mentioned in the introduction, every Fano manifold as in Theorem~\ref{thm:Pas}~\ref{thm:Pas_h} does not admit  K\"ahler-Einstein metrics.
By \cite[Corollary~5.7]{Del19}, they are not $K$-semistable.
On the other hand, for $X = \PF$ or $\PG$, this author does not know whether or not $X$ admits a K\"ahler-Einstein metric (and whether or not $X$ is $K$-semistable) at this moment.
\end{remark}

\begin{remark}[Fano foliations]
After a fundamental work on a characterizations of projective spaces and hyperquadrics \cite{ADK08}, Araujo and Druel started the study of Fano foliations in a series of papers \cite{AD14,AD16,AD17};
A foliation $\sF$ on a smooth projective variety $X$ is called \emph{Fano} if the first Chern class $c_1(\sF)$ is ample.
For such a foliation, its \emph{index} $r_\sF$ is defined as the largest integer which divides $c_1(\sF)$ in $\Pic(X)$.
As is similar to the case of the classification of Fano manifolds, the structure of a Fano foliation $\sF$ is rather simple if its index $r_\sF$ is relatively large with respect to $\rank \sF$.
For example, Fano foliations with $r_\sF \geq \rank \sF$ are completely classified \cite[Th\'eor\`eme~3.8]{DC05}, \cite{ADK08}.
The next largest index cases, i.e.,  the cases with $r_\sF = \rank \sF-1 $ and $r_\sF = \rank \sF-2 $ are called del Pezzo and Mukai foliations respectively, and these foliations are intensively studied in the above quoted papers \cite{AD14,AD16,AD17}.
See also \cite{Fig19} for a study of del Pezzo foliations, and \cite{Ara19} for an account of this topic on Fano foliations.

From this point of view, the canonical foliations on horospherical manifolds give several new examples of Fano, del Pezzo or Mukai foliations.
It would be noteworthy that, contrary to the horospherical cases, the canonical foliations on $\PF$ and $\PG$ have trivial first Chern classes, and hence $\phi^{\Aut^0(X)} = 0$.
\end{remark}

\bibliographystyle{amsalpha}
\bibliography{}
\end{document}